\definecolor{labelkey}{rgb}{0.0, 0.8, 0.3}
\numberwithin{equation}{section}
\newtheorem{thm}{Theorem}
\newtheorem*{thm*}{Theorem}
\newtheorem{rem}[thm]{Remark}
\newtheorem{defi}[thm]{Definition}
\newtheorem{cor}[thm]{Corollary}
\newcommand{\Sp}{S} 		
\newcommand{\prob}{\mathbb{P}}	
\renewcommand{\P}{P}	
\newcommand{\Q}{Q}	
\newcommand{\esp}{\mathbb{E}}
\newcommand{\E}{\mathbb{E}}
\newcommand{\bcdot}{\,\begin{picture}(-1,1)(-1,-3)\circle*{2}\end{picture}\ \,}
\newcommand{\bs}{b^\star}	
\newcommand{\mus}{\mu^\star} 
\newcommand{\PS}{\mathcal{P}_2(\Sp)}		
\newcommand{\cP}{\mathcal{P}}
\newcommand{\R}{\mathbb R}
\newcommand{\ra}{\rightarrow}
\newcommand{\argmin}{\mathop{\mathrm{argmin}}}
\newcommand{\lin}{\mathcal L}
\newcommand{\curv}{\mathrm{curv}}
\renewcommand{\phi}{\varphi}
\newcommand{\ud}{\mathrm{d}}
\DeclareMathOperator{\supp}{supp}
\DeclareMathOperator{\id}{id}
\newcommand{\email}[1]{(\href{mailto:#1}{\rm{#1}})}
\begin{document}

\begin{frontmatter}

	\title{Fast convergence of empirical barycenters in Alexandrov spaces and the Wasserstein space}
	\runtitle{Fast convergence of empirical barycenters}

	\author{ \fnms{Thibaut} \snm{Le Gouic}\thanksref{}\ead[label=tlg]{thibaut.le\_gouic@math.cnrs.fr},
		\fnms{Quentin} \snm{Paris}\ead[label=qp]{qparis@hse.ru},
		\fnms{Philippe} \snm{Rigollet}\ead[label=rigollet]{rigollet@math.mit.edu}
			~and~
		\fnms{Austin J.} \snm{Stromme}\ead[label=ajs]{astromme@mit.edu}
	}

	\address{{Thibaut Le Gouic}\\
		{National Research University} \\
		{Higher School of Economics (HSE)}\\
		{Faculty of Computer Science}\\
		{Moscow, Russia}\\
	    {Aix Marseille Univ, CNRS, Centrale Marseille, I2M}\\
	    {Marseille, France}\\
	    {\email{thibaut.le\_gouic@math.cnrs.fr}}
	}
	
		\address{{Quentin Paris}\\
		{National Research University} \\
		{Higher School of Economics (HSE)}\\
		{Faculty of Computer Science}\\
		{Moscow, Russia}\\
		{\email{qparis@hse.ru}}
	}

	\address{{Philippe Rigollet}\\
		{Department of Mathematics} \\
		{Massachusetts Institute of Technology}\\
		{77 Massachusetts Avenue,}\\
		{Cambridge, MA 02139-4307, USA}\\
		{\email{rigollet@math.mit.edu}}
	}
	\address{{Austin Stromme}\\
		{Department of EECS} \\
		{Massachusetts Institute of Technology}\\
		{77 Massachusetts Avenue,}\\
		{Cambridge, MA 02139-4307, USA}\\
		{\email{astromme@mit.edu}}
	}

\runauthor{Le Gouic, Paris, Rigollet \& Stromme}

\begin{abstract}
This work establishes fast, i.e., parametric, rates of convergence for empirical barycenters over a large class of geodesic spaces with curvature bounds in the sense of Alexandrov. More specifically, we show that parametric rates of convergence are achievable under natural conditions that characterize the bi-extendibility of geodesics emanating from a barycenter.  These results largely advance the state-of-the-art on the subject both in terms of rates of convergence and the variety of spaces covered. In particular, our results apply to infinite-dimensional spaces such as the 2-Wasserstein space, where bi-extendibility of geodesics translates into regularity of Kantorovich potentials.
\end{abstract}

\end{frontmatter}

\section{Introduction}

The notion of \emph{average} is paramount across all of statistics. It is fundamental not only in theory but also in practice as it arises in nearly every known estimation method: the method of moments, empirical risk minimization, and more.
The law of large numbers, sometimes dubbed the fundamental law of statistics, ensures that the average of independent and identically distributed random variables converges to their common expected value.
While such results guarantee asymptotic validity of the average, modern statistics and machine learning focuses on non-asymptotic performance guarantees that hold for every sample size.
The goal of this paper is to provide such non-asymptotic guarantees for a generalized notion of average over metric spaces with the Wasserstein space as a prime example.

To illustrate our goal more precisely, consider the following example. Let $X, X_1, \ldots, X_n$ be i.i.d. (independent and identically distributed) copies of $X$  in a vector space equipped with a distance $d$. Denote by $\E[X]$ and $\bar X$ the expectation of $X$ and the average of the $X_i$'s respectively. In many instances, $\bar X$ converges to $\E[X]$ at a \emph{dimension-independent and parametric rate}.
Such convergence is characterized by the following inequality:
\begin{equation}
    \label{eq:variance_bound1}
    \E [d^2(\bar X,\E[X])]\le  \frac{c\sigma^2}{n}\,,
\end{equation}
where $c>0$ and $\sigma^2=\E[ d^2(X,\E[X])]$ denotes the variance of $X$. When the underlying space is a Hilbert space, and $d$ is the natural Hilbert metric, this inequality is in fact an equality with $c=1$. More generally, the bound~\eqref{eq:variance_bound1} holds in a Banach space if and only if it is of type~2~\cite{LedTal91}, which is  a property linked to the geometry of the Banach space~\cite{Kwa72}.

Using a predominantly geometric approach, we tackle the question of extending these statistical guarantees to metric spaces that are devoid of a linear structure. In recent years, this problem has gone from a theoretical curiosity to a practical necessity. Indeed, data such as images, shapes, networks, point clouds and even distributions are now routinely collected and come with the need for new statistical methods with strong performance guarantees.

\subsection{Barycenters}
The barycenter is a natural extension of the notion of expectation on an abstract metric space which we recall next. Let $(S,d)$ be a metric space and $\PS$ be the set of (Borel) probability measures $P$ on $S$ such that, the \emph{variance functional} defined by \[b \mapsto \int d^2(b,x)\ud P(x)\] 
is finite on $S$. 
For $P\in\PS$, a \emph{barycenter} of $P$ is any $\bs \in \Sp$ such that  
\[\bs\in\argmin_{b\in\Sp}\int d^2(b,x) \ud P(x).\]
Throughout, we always implicitly assume the existence of at least one barycenter. The question of existence has been largely addressed in the literature and shown to hold in most reasonable scenarios~\cite{Afs11,Le-Lou17}. 

Letting $X_1, \ldots, X_n$ be $n$ random variables drawn independently from $P$, an \emph{empirical barycenter}, is defined as a barycenter of the empirical distribution $P_n=(1/n)\sum_{i=1}^n\delta_{X_i}$, i.e.,
\[
b_n \in \argmin_{b \in S} \frac{1}{n}\sum_{i=1}^n d^2(b,X_i)\,.
\]
The main motivation for this work is to study the statistical behavior of empirical barycenters in the context where $(S,d)$ is the $2$-Wasserstein space  of probability measures over $\R^D$ that is defined by $S=\mathcal P_2(\R^D)$ equipped with the $2$-Wasserstein metric.

 The Wasserstein space has recently played a central role in many applications including machine learning~\cite{ArjChiBot17,GenPeyCut18}, computer graphics~\cite{SolGoePey15,FeyChaVia17}, statistics~\cite{RigWee18, PanZem19} and computational biology~\cite{SchShuTab19}.

Barycenters in this setting are called Wasserstein barycenters and were first studied in~\cite{AguCar11} who derive a dual formulation of the associated optimization problem and establish regularity properties of the solution. Despite computational advances~\cite{CutDou14, StaClaSol17} and asymptotic consistency results~\cite{AguCar17,Le-Lou17, ZemPan19}, little is  known about the rates of convergence of barycenters on Wasserstein spaces, let alone general metric spaces.
Initial contributions have focused on the Wasserstein space over the real line which is isometric to a convex subset of a Hilbert space~\cite{AguCar17} or finite dimensional subspaces of the Wasserstein space~\cite{KroSpoSuv19, chewi2020gradient}. 
The study of rates of convergence of Wasserstein barycenters in the general case was triggered in~\cite{AhiGouPar18}, where the truly geometric nature of the question was first brought forward. However, the rates obtained in that paper suffer from the curse of dimensionality: the rate $n^{-1}$ in~\eqref{eq:variance_bound1} is replaced with $n^{-1/D}$, whenever the probability distributions are defined on $\R^D, D\ge 3$ and their techniques yield the suboptimal rate $n^{-2/3}$ in the flat case $D=1$.
 
In this work, we aim to improve these results to optimality. To that end, we adopt a general framework and establish dimension-independent
and parametric rates for empirical barycenters, of the form~\eqref{eq:variance_bound1}, for a large family of geodesic metric spaces with curvature bounds in the sense of Alexandrov. In particular these spaces need not be finite-dimensional. More specifically, our results apply to geodesic spaces with positive curvature under a compelling synthetic geometric condition: the bi-extendibility of all geodesics emanating from a barycenter. 

The $2$-Wasserstein space over $\R^D$ is itself a geodesic space with positive curvature~\cite{AmbGigSav05} and parametric rates for this case follow as a simple consequence of our general result. In addition, we show that the bi-extendibility condition translates into simple regularity conditions on the Kantorovich potentials.

We end by mentioning references related to our work.
General properties of barycenters in metric spaces with bounded curvature are studied in~\cite{Stu03,Oht12,Yok16} and~\cite{Yok17}. Specific examples of spaces with negative curvature are studied in~\cite{HotHucLe13}.
The asymptotic properties of empirical barycenters, in the case where $\Sp$ is a Riemannian manifold, are addressed in~\cite{BhaPat03,BhaPat05} and~\cite{KenLe11}. The statistical properties of empirical barycenters in abstract metric spaces have only been considered in~\cite{Sh18} (for the case of negatively curved spaces) and~\cite{AhiGouPar18}.

\subsection{Overview of main results}
\label{sec:overview}

Our results rely on the notion of \emph{extendible geodesics}. For $\lambda_{\rm in},\lambda_{\rm out}>0$ we say that a constant-speed geodesic $\gamma:[0,1] \to \Sp$ is $(\lambda_{\rm in}, \lambda_{\rm out})$-extendible if there exists a constant-speed geodesic $\gamma^{+} \colon [-\lambda_{\rm in},1+\lambda_{\rm out}]\to \Sp$ such that $\gamma$ is the restriction of $\gamma^{+}$ to $[0,1]$. We now state our main result and refer the reader to Section~\ref{sec:main} for omitted definitions. The \emph{variance} $\sigma^2$ of a distribution $P \in \cP_2(S)$ is given by
$$
\sigma^2=\inf_{b \in S}\int d^2(b, x) \ud P(x)=\int  d^2(\bs, x) \ud P(x)\,,
$$
where $\bs$ is a barycenter of $P$.

{Our main theorem is stated below informally and follows directly from  Theorems \ref{thm:master} and \ref{thm:extendgeod}.}
\begin{thm}[Main theorem, informal]
\label{thm:maininformal}
Let $(\Sp,d)$ be a geodesic space of nonnegative curvature. Let $\P\in\PS$ be such that, for any $x$ in the support of $\P$, there exist a geodesic from $\bs$ to $x$ that is $(\lambda_{\rm in},\lambda_{\rm out})$-extendible. Define
\[
k:= \frac{\lambda_{\rm out}}{1+\lambda_{\rm out}}-\frac{1}{\lambda_{\rm in}}\,.
\]
Then if $k>0$, $\bs$ is unique and for any  empirical barycenter $b_n$, we have
\[
\E[d^2(b_n,b^\star)]\le \frac{4\sigma^2}{k n}\,.
\]

\end{thm}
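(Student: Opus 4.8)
The plan is to decouple the argument into a deterministic \emph{geometric} step and a short \emph{statistical} step. Write $F(b)=\int d^2(b,x)\,\ud\P(x)$ and $F_n(b)=\int d^2(b,x)\,\ud\Pn(x)$ for the population and empirical objectives, so that $\bs$ minimizes $F$ and $b_n$ minimizes $F_n$. The heart of the matter is the following \emph{variance inequality}: under the stated extendibility hypothesis and nonnegative curvature,
\[
F(b)-F(\bs)\ \ge\ k\,d^2(b,\bs)\qquad\text{for every }b\in\Sp .
\]
Granting this, uniqueness of $\bs$ is immediate, since $k>0$ forces $F(b)>F(\bs)$ whenever $b\neq\bs$. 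I will establish the variance inequality first and then feed it into a basic-inequality argument.

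\textbf{Statistical step (assuming the variance inequality).} Set $g_b(x)=d^2(b,x)-d^2(\bs,x)$, so that $F(b)-F(\bs)=\int g_b\,\ud\P$ and $F_n(b)-F_n(\bs)=\int g_b\,\ud\Pn$. Minimality of $b_n$ gives $\int g_{b_n}\,\ud\Pn\le 0$, whence the basic inequality
\[
k\,d^2(b_n,\bs)\ \le\ F(b_n)-F(\bs)\ \le\ \int g_{b_n}\,\ud\P-\int g_{b_n}\,\ud\Pn .
\]
It remains to bound the expected fluctuation on the right. The key is that $g_b$ is, up to a curvature-controlled defect, \emph{linear} in the tangent cone at $\bs$: the first-variation formula gives $g_b(x)\approx d^2(\bs,b)-2\langle\log_{\bs}x,\log_{\bs}b\rangle$, and first-order optimality of $\bs$ reads $\int\log_{\bs}x\,\ud\P(x)=0$. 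Thus the population mean of the linear part vanishes and the fluctuation is driven by the empirical tangent mean $m_n=\tfrac1n\sum_i\log_{\bs}X_i$, whose second moment is of order $\sigma^2/n$. A direct Cauchy--Schwarz pairing against $\log_{\bs}b_n$ combined with the quadratic lower bound already yields a rate $\sigma^2/(k^2 n)$; sharpening this to $4\sigma^2/(kn)$ requires exploiting the optimality of $b_n$ itself (its first-order condition $\tfrac1n\sum_i\log_{b_n}X_i=0$) so as not to square the factor $1/k$. I expect the two genuine subtleties here to be this constant sharpening and the control of the defect between $g_b$ and its tangent-cone linearization.

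\textbf{Geometric step (the variance inequality).} This is where nonnegative curvature fights against us and where the real difficulty lies. In an Alexandrov space of curvature $\ge 0$ the squared distance is \emph{less} convex than in the Euclidean model; the hinge comparison at $\bs$ yields only the \emph{wrong-direction} bound $g_b(x)\le d^2(\bs,b)-2\langle\log_{\bs}x,\log_{\bs}b\rangle$, which after integration gives merely $F(b)-F(\bs)\le d^2(b,\bs)$. A matching lower bound therefore cannot come from curvature alone and must use extendibility. For $x\in\supp\P$, let $\gamma_x$ be the geodesic from $\bs$ to $x$ and $\gamma_x^+$ its $(\lambda_{\rm in},\lambda_{\rm out})$-extension. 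The point $x$ is the interior point at parameter $\tau=1/(1+\lambda_{\rm out})$ of the geodesic from $\bs$ to the outward endpoint $x^+=\gamma_x^+(1+\lambda_{\rm out})$; applying the interior-point comparison for curvature $\ge 0$ with the competitor $b$ as external vertex produces a \emph{lower} bound on $d^2(x,b)$ whose coefficient on $d^2(\bs,b)$ is exactly $1-\tau=\lambda_{\rm out}/(1+\lambda_{\rm out})$, using $d(\bs,x^+)=(1+\lambda_{\rm out})\,d(\bs,x)$. This is the origin of the first term in $k$. The inward extension by $\lambda_{\rm in}$ then controls the residual angular terms, guaranteeing that the comparison inner product faithfully proxies the genuine first variation at the cost of an error bounded by $\tfrac1{\lambda_{\rm in}}\,d^2(b,\bs)$. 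Integrating the resulting pointwise bound over $\P$, the linear term in $\log_{\bs}b$ is annihilated by optimality and the coefficients combine into $k=\lambda_{\rm out}/(1+\lambda_{\rm out})-1/\lambda_{\rm in}$.

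\textbf{Main obstacle.} The principal difficulty is the bookkeeping in the geometric step that fuses the two one-sided extension comparisons into a single clean pointwise inequality with the advertised coefficient $k$: one must keep the quadratic term $d^2(\bs,b)$ with the correct factor, recognize the linear term as a first variation so optimality kills it, and bound the curvature/angle defect by $\tfrac1{\lambda_{\rm in}}d^2(\bs,b)$, all synthetically, with no smooth structure on $\Sp$. Once this inequality is in hand, both uniqueness of $\bs$ and the parametric rate follow by the short arguments above, and specializing to $\mathcal P_2(\R^D)$ reduces to checking that regularity of Kantorovich potentials encodes the extendibility condition.
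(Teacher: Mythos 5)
Your skeleton---a geometric variance inequality $F(b)-F(\bs)\ge k\,d^2(b,\bs)$ fed into a basic-inequality argument---matches the paper's approach at a high level, but the two items you flag as ``subtleties'' and leave open are exactly where the proof lives, and as written your plan does not close. The critical gap is the defect control. Write $\ell_b(x)=d^2(\bs,b)-2\langle\log_{\bs}x,\log_{\bs}b\rangle_{\bs}$ and $r_b=g_b-\ell_b\le 0$ (nonnegative curvature). Your basic inequality requires an upper bound on $(\P-\Pn)g_{b_n}=(\P-\Pn)\ell_{b_n}+(\P-\Pn)r_{b_n}$. The linear part is indeed at most $2\|\bs_n\|_{\bs}\,d(b_n,\bs)$ where $\bs_n$ is the empirical tangent mean, but the defect part contains $-\Pn r_{b_n}=d^2(b_n,\bs)\bigl(1-\Pn k^{b_n}_{\bs}\bigr)\ge 0$, which the pointwise hugging bound $k^{b_n}_{\bs}\ge k$ only controls by $(1-k)\,d^2(b_n,\bs)$. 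Plugging this in yields $(2k-1)\,d^2(b_n,\bs)\le 2\|\bs_n\|_{\bs}\,d(b_n,\bs)$, which is vacuous whenever $k\le 1/2$; so a ``direct Cauchy--Schwarz pairing'' does \emph{not} already yield $\sigma^2/(k^2n)$. The paper's resolution is not a bound but an exact identity: the variance \emph{equality} (Theorem~\ref{thm:varequal}), applied twice---at $\bs$ for $\P$ and at $b_n$ for $\Pn$---makes the defect fluctuation $(\P-\Pn)k^{b_n}_{\bs}$ cancel identically, leaving
\[
d^2(b_n,\bs)\,\Pn\bigl[k^{b_n}_{\bs}+k^{\bs}_{b_n}\bigr]=(\P-\Pn)\ell_{b_n}\le 2\|\bs_n\|_{\bs}\,d(b_n,\bs),
\]
after which one uses $\Pn k^{\bs}_{b_n}\ge 0$, valid because $b_n$ is a barycenter of $\Pn$ (Remark~\ref{rem:pkpos}). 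This two-sided use of the hugging function---at $b_n$ as well as at $\bs$---is the missing idea; it is how the optimality of $b_n$ actually enters, rather than through the first-order condition $\frac1n\sum_i\log_{b_n}X_i=0$ you propose.

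Two further points. First, you invoke as standard the facts that $\int\log_{\bs}x\,\ud\P(x)=0$ and that integration against $\langle\log_{\bs}(\cdot),\log_{\bs}b\rangle_{\bs}$ is linear (needed both for killing the population linear term and for Cauchy--Schwarz). In an abstract Alexandrov space the tangent cone need not even be geodesic, and these statements constitute a substantial portion of the paper (Theorem~\ref{thm:toolbox}: embedding into the ultratangent cone, the Hilbert subspace $\lin_{\bs}\Sp$ containing $\log_{\bs}(\supp\P)$, the Pettis-integral identity, and the exponential-barycenter property); they cannot be cited as a ``first-variation formula.'' Second, on the constant: your instinct that the argument naturally produces $1/k^2$ is sound---the paper's formal result (Theorem~\ref{thm:master} combined with Theorem~\ref{thm:extendgeod}) is $\E d^2(b_n,\bs)\le 4\sigma^2/(k^2n)$, and the $4\sigma^2/(kn)$ of the informal statement (since $k<1$, a strictly stronger claim) is not what the paper's proof delivers either; the sharpening you defer is not available from these techniques, so no proof of the literal $1/k$ statement exists in the paper. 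Finally, your geometric step differs from the paper's: instead of a direct comparison-triangle computation with an asserted $\tfrac{1}{\lambda_{\rm in}}d^2(b,\bs)$ error term (which is precisely what would need proof and is not established by your sketch), the paper proves $k^b_{\bs}(x)\ge k$ by exhibiting a two-point measure $\tfrac{\xi}{1+\xi}\delta_x+\tfrac{1}{1+\xi}\delta_z$, with $z$ on the inward extension and $\xi=\lambda_{\rm in}/(1+\lambda_{\rm out})$, whose barycenter is $\bs$, applying Theorem~\ref{thm:extendvariance} to it, and using $k^b_{\bs}(z)\le 1$ to isolate $k^b_{\bs}(x)$.
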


{Note that the constant $k$ is a geometric characteristic of the metric measure space $(S,d,P)$. Since the seminal work of Cheeger and Colding~\cite{CheCol96,CheCol97,CheCol00a,CheCol00}, and culminating with the Lott-Sturm-Villani theory of Ricci curvature lower bounds for geodesic spaces~\cite{Stu06,Stu06a, LotVil09}, such triples are central to modern geometry.}

To illustrate the meaning of the condition $k>0$, consider the canonical example of the Euclidean unit sphere (in any dimension). In this setup, the $(\lambda_{\rm in},\lambda_{\rm out})$-extendibility imposes that, for all $x$ in the support of $P$,
\[d(b^\star,x)(1+ \lambda_{\rm in}+\lambda_{\rm out})\le \pi
\]
In addition, condition $k>0$ requires that 
\[1+\lambda_{\rm in}+\lambda_{\rm out}\ge 2+\lambda_{\rm out}+\frac{1}{\lambda_{\rm out}}\ge 4,
\]
so that the support of $P$ is necessarily contained in the spherical ball of center $b^*$ and radius $\pi/4$. This bound aligns with known results, on the sphere, stating that we cannot hope for $1/n$ rates when mass is allowed to accumulate around a point antipodal to $b^\star$~\cite{EltHuc19}. The above theorem thus requires stringent conditions on the support of $P$ that can be largely relaxed using a local notion of extendibility where $\lambda_{\rm in}$ and $\lambda_{\rm out}$ may depend on $x$ and $k$ is only required to be positive on average (see Theorem~\ref{thm:master2}).

It is known that extendibility of geodesics  in the 2-Wasserstein space is related to the regularity of Kantorovich potentials~\cite{AhiGouPar18}. In this case, we get the following corollary.

\begin{cor}
Let $\P\in\cP_2(\cP_2(\R^D))$ be a probability measure on the 2-Wasserstein space and let $\mus\in\cP_2(\R^D)$ be a barycenter of $P$.  Let $\alpha,\beta>0$ and suppose that every $\mu\in\supp(P)$ is the pushforward of $\mu^{\star}$ by the gradient of an $\alpha$-strongly convex and $\beta$-smooth function $\varphi_{\mus\to\mu}$, i.e., $\mu=(\nabla\varphi_{\mus\to\mu})_{\#}\mus$. Then, if $\beta-\alpha<1$, $
\mus$ is unique and any empirical barycenter $\mu_n$ of $P$ satisfies
\[
\esp W_2^2(\mu_n,\mus)\le  \frac{4\sigma^2}{(1-\beta+\alpha)n}.
\]
\end{cor}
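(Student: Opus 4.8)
The plan is to reduce the corollary to the informal main theorem stated above (or, since the hypotheses vary with $\mu$, to its local version), by verifying its assumptions in the Wasserstein space $(\cP_2(\R^D),W_2)$. Three ingredients are needed. First, that this space is geodesic and of nonnegative curvature, which is classical~\cite{AmbGigSav05}. Second, an explicit description of the geodesics emanating from the barycenter $\mus$ together with their extendibility window. Third, the translation of the regularity of the potentials $\varphi_{\mus\to\mu}$ into the extension parameters $\lambda_{\rm in},\lambda_{\rm out}$, so that the constant $k$ can be computed and substituted into the conclusion of the main theorem.

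For the second and third ingredients I would argue as follows. Since $\varphi_{\mus\to\mu}$ is convex and $\mu=(\nabla\varphi_{\mus\to\mu})_{\#}\mus$, Brenier's theorem identifies $\nabla\varphi_{\mus\to\mu}$ as the optimal transport map from $\mus$ to $\mu$, and McCann's interpolation
\[
\gamma_t=\big((1-t)\,\mathrm{id}+t\,\nabla\varphi_{\mus\to\mu}\big)_{\#}\mus
\]
is the constant-speed geodesic from $\mus$ to $\mu$. To extend $\gamma$ I keep the same affine-in-$t$ formula and use that it remains a geodesic exactly on the range of $t$ for which the interpolating potential $\psi_t:=(1-t)\tfrac12|\cdot|^2+t\,\varphi_{\mus\to\mu}$ stays convex, since then $\nabla\psi_t$ is again a Brenier map. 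Writing $\nabla^2\psi_t=(1-t)I+t\,\nabla^2\varphi_{\mus\to\mu}$ and using $\alpha I\preceq\nabla^2\varphi_{\mus\to\mu}\preceq\beta I$, convexity amounts to $1+t(\lambda-1)\ge0$ for every eigenvalue $\lambda\in[\alpha,\beta]$; the binding eigenvalue is $\lambda=\alpha$ for $t>0$ and $\lambda=\beta$ for $t<0$. In the regime $\alpha\le1\le\beta$ this yields that $\gamma$ is $(\lambda_{\rm in},\lambda_{\rm out})$-extendible with
\[
\lambda_{\rm out}=\frac{\alpha}{1-\alpha},\qquad \lambda_{\rm in}=\frac{1}{\beta-1}.
\]

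It then remains to compute $k$. Substituting the two values gives $\lambda_{\rm out}/(1+\lambda_{\rm out})=\alpha$ and $1/\lambda_{\rm in}=\beta-1$, whence
\[
k=\frac{\lambda_{\rm out}}{1+\lambda_{\rm out}}-\frac{1}{\lambda_{\rm in}}=\alpha-(\beta-1)=1-\beta+\alpha,
\]
so that the hypothesis $\beta-\alpha<1$ is precisely the requirement $k>0$. The main theorem then gives uniqueness of $\mus$ together with $\esp\,W_2^2(\mu_n,\mus)\le 4\sigma^2/(kn)=4\sigma^2/\big((1-\beta+\alpha)n\big)$, as claimed. I expect the main obstacle to be the geometric step of the second paragraph: justifying rigorously that the affinely extended curve is genuinely a constant-speed geodesic of $(\cP_2(\R^D),W_2)$ on the full window $[-\lambda_{\rm in},1+\lambda_{\rm out}]$, and not merely a curve. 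This requires invoking Brenier/McCann theory to show that optimality of all intermediate maps is equivalent to convexity of $\psi_t$, together with the careful bookkeeping that matches the strong-convexity constant $\alpha$ to the forward end and the smoothness constant $\beta$ to the backward end of the extension interval.
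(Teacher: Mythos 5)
Your proposal is correct and shares the paper's skeleton: show that every geodesic from $\mus$ to a point of $\supp(P)$ is $(\lambda_{\rm in},\lambda_{\rm out})$-extendible with $\lambda_{\rm out}=\alpha/(1-\alpha)$ and $\lambda_{\rm in}=1/(\beta-1)$, deduce from Theorem~\ref{thm:extendgeod} that $k^b_{\mus}(\mu)\ge \alpha-(\beta-1)=1-\beta+\alpha$ for every $b$ and every $\mu\in\supp(P)$, and conclude by the master theorem. Where you differ is in how extendibility is established. The paper's proof is a pure citation: forward extendibility is Theorem~\ref{thm:halfextwass} applied to the $\alpha$-strongly convex potential $\varphi_{\mus\to\mu}$, and backward extendibility is the same theorem applied to the reversed geodesic, using that $\beta$-smoothness of $\varphi_{\mus\to\mu}$ is equivalent to $1/\beta$-strong convexity of the reverse potential $\varphi_{\mu\to\mus}=\varphi^{*}_{\mus\to\mu}$ (Fenchel--Legendre duality, \cite[Theorem 18.15]{BauCom17}). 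You instead reconstruct the extension by hand, extending McCann's interpolation affinely and checking convexity of $\psi_t$; this amounts to re-proving the sufficiency direction of Theorem~\ref{thm:halfextwass}. In particular, the step you flag as the ``main obstacle''---that the affinely extended curve is a genuine constant-speed geodesic whenever $\psi_t$ is convex---is exactly what Theorem~\ref{thm:halfextwass} supplies, so your outline closes as soon as you invoke it (or, equivalently, note that for convex $\psi_s,\psi_t$ the map $\nabla\psi_t\circ(\nabla\psi_s)^{-1}$ is the gradient of a convex function, hence a Brenier map, giving $W_2(\gamma_s,\gamma_t)=|t-s|\,W_2(\mus,\mu)$). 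Your route is more self-contained and makes the bookkeeping transparent ($\alpha$ governs the outward window, $\beta$ the inward one); the paper's is shorter and avoids regularity issues.

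Two caveats on your version. First, the bounds $\alpha I\preceq\nabla^2\varphi_{\mus\to\mu}\preceq\beta I$ presuppose a Hessian, which the paper's subgradient-based definitions do not grant; this is repaired without second derivatives by writing $\psi_t=\tfrac12(1-t+t\alpha)|\cdot|^2+t\bigl(\varphi_{\mus\to\mu}-\tfrac{\alpha}{2}|\cdot|^2\bigr)$ for $t\ge 0$ and $\psi_t=\tfrac12(1-t+t\beta)|\cdot|^2+(-t)\bigl(\tfrac{\beta}{2}|\cdot|^2-\varphi_{\mus\to\mu}\bigr)$ for $t\le 0$, which are manifestly convex on exactly the windows you state. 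Second, your restriction to the regime $\alpha\le 1\le\beta$ is needed for $\lambda_{\rm in},\lambda_{\rm out}\ge 0$ but is harmless (and implicitly shared by the paper): since a Wasserstein barycenter satisfies $\int\nabla\varphi_{\mus\to\mu}\,\ud P(\mu)=\mathrm{id}$ $\mus$-a.e., having $\beta<1$ (averages of $\beta$-Lipschitz gradients) or $\alpha>1$ (averages of $\alpha$-strongly monotone gradients) would force $\supp(\mus)$ to be a singleton, where the claim is trivial. Finally, note that you quote the informal theorem with denominator $kn$, whereas Theorem~\ref{thm:master} as stated gives $nk_{\rm min}^2$; this discrepancy is internal to the paper (its corollary also states the first power), but a literal application of Theorem~\ref{thm:master} would yield $(1-\beta+\alpha)^2$ in the denominator.
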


While our results are resolutely focused on spaces with nonnegative curvature, we obtain the following strong result about spaces of nonpositive curvature as a byproduct of the techniques developed in this paper.

\begin{thm} 
Suppose $(\Sp,d)$ is a geodesic space satisfying
$\kappa \le \curv(\Sp) \le 0$ for some $\kappa \le0$. Then, any distribution $\P\in\PS$ has a unique barycenter $\bs$ and any empirical barycenter $b_n$ satisfies
\[
\E d^2(b_n, \bs) \le \frac{\sigma^2}{n}.
\]
\end{thm}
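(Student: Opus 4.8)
The engine of the proof will be the exceptionally strong convexity of the Fréchet functional that the bound $\curv(\Sp)\le 0$ provides. In a space of nonpositive curvature the map $t\mapsto d^2(\gamma(t),x)$ is $2$-strongly convex along every geodesic $\gamma$, so both $F(b):=\int d^2(b,x)\,\ud\P(x)$ and its empirical version $F_n(b):=\int d^2(b,x)\,\ud\Pn(x)$ are $2$-strongly geodesically convex. This at once forces uniqueness of $\bs$ and of every $b_n$, and gives the two sharp variance inequalities
\[
F(b)\ \ge\ F(\bs)+d^2(b,\bs),\qquad F_n(b)\ \ge\ F_n(b_n)+d^2(b,b_n)\qquad(b\in\Sp),
\]
with constant exactly $1$ in front of $d^2$ and, crucially, \emph{no} curvature correction term (in contrast with the positively curved regime). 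These are the only structural facts I would invoke.

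Evaluating the empirical inequality at $b=\bs$ yields the pointwise bound $d^2(b_n,\bs)\le F_n(\bs)-F_n(b_n)$. Since $\E F_n(\bs)=F(\bs)=\sigma^2$, taking expectations reduces the whole theorem to the single lower bound
\[
\E\!\left[\inf_{b\in\Sp}F_n(b)\right]\ \ge\ \frac{n-1}{n}\,\sigma^2 .
\]
This is precisely the nonlinear analogue of the Hilbert identity $\E\big[\tfrac1n\sum_i\lVert X_i-\bar X\rVert^2\big]=\tfrac{n-1}{n}\sigma^2$, and it is where the factor $1/n$ must come from.

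To produce this lower bound I would set up a Welford-type recursion. Writing $\sigma_n^2:=\inf_b F_n(b)$, splitting $\Pn=\tfrac{n-1}{n}\Q+\tfrac1n\delta_{X_n}$ (with $\Q$ the empirical measure of $X_1,\dots,X_{n-1}$, $b_{n-1}$ its barycenter, and $\sigma_{n-1}^2$ its variance), lower bounding the first-$(n-1)$ functional by its own variance inequality, and invoking the exact two-point identity valid in nonpositive curvature,
\[
\inf_{b\in\Sp}\big\{\lambda\,d^2(b,p)+(1-\lambda)\,d^2(b,q)\big\}=\lambda(1-\lambda)\,d^2(p,q)
\]
(the minimizer lies on the geodesic $[p,q]$, by first-order optimality and uniqueness of geodesics), one obtains the recursion, tight in the flat case,
\[
\sigma_n^2\ \ge\ \frac{n-1}{n}\,\sigma_{n-1}^2+\frac{n-1}{n^2}\,d^2(b_{n-1},X_n).
\]
Conditioning on $X_1,\dots,X_{n-1}$ turns $d^2(b_{n-1},X_n)$ into $F(b_{n-1})$. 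The same mechanism, run sequentially, already settles the constant for the inductive mean $S_k$ (move a fraction $1/k$ along the geodesic from $S_{k-1}$ to $X_k$): nonpositive curvature, independence, and $F(S_{k-1})\ge\sigma^2+d^2(S_{k-1},\bs)$ give $k^2\,\E d^2(S_k,\bs)\le(k-1)^2\,\E d^2(S_{k-1},\bs)+\sigma^2$, hence $\E d^2(S_n,\bs)\le\sigma^2/n$ exactly.

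The hard part will be closing the recursion at the sharp constant for the genuine minimizer $b_n$. Using only $F(b_{n-1})\ge\sigma^2$ the recursion telescopes to $\E d^2(b_n,\bs)\lesssim \sigma^2(\log n)/n$, losing a logarithm; reaching $\sigma^2/n$ requires the reverse estimate $\E F(b_m)\ge\tfrac{m+1}{m}\sigma^2$, a \emph{lower} bound on the expected suboptimality of the empirical barycenter. Because the variance inequalities only ever bound barycenter displacements from above, this reverse bound is the crux: it expresses that nonpositively curved spaces fluctuate at least as much as the Euclidean model, which is therefore extremal and explains why the final constant carries no dependence on the lower curvature bound $\kappa$ (used in the statement only to place the result inside the paper's two-sided framework).
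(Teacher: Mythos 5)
Your setup is correct as far as it goes: the $2$-strong geodesic convexity of $d^2(\cdot,x)$ under $\curv(\Sp)\le 0$, the two variance inequalities, the reduction $\E d^2(b_n,\bs)\le\sigma^2-\E[\inf_b F_n(b)]$, and the recursion $\sigma_n^2\ge\frac{n-1}{n}\sigma_{n-1}^2+\frac{n-1}{n^2}d^2(b_{n-1},X_n)$ are all valid. But the proof is incomplete at exactly the step you yourself call the crux, and that step is not merely unproven --- it is false under the hypotheses of the theorem. The reverse estimate $\E F(b_m)\ge\frac{m+1}{m}\sigma^2$ asserts that nonpositively curved spaces fluctuate at least as much as the Euclidean model; in fact they fluctuate strictly less as soon as the curvature is strictly negative and the data spread in more than one direction. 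Concretely, take $\Sp=M^2_{-1}$ (so $-1\le\curv(\Sp)\le 0$) and $\P$ uniform on three points at distance $r$ from a common center, placed symmetrically; then $\bs$ is that center and $\sigma^2=r^2$. Classical Fr\'echet-mean asymptotics \cite{BhaPat05} give $\sqrt{m}\,\log_{\bs}(b_m)\Rightarrow N\bigl(0,H^{-1}\Gamma H^{-1}\bigr)$ with $\Gamma=2r^2I$ and $H=(1+r\coth r)\,I\succ 2I$, whence
\[
m\bigl(\E F(b_m)-\sigma^2\bigr)\;\longrightarrow\;\tfrac12\,\mathrm{tr}\bigl(\Gamma H^{-1}\bigr)=\frac{2r^2}{1+r\coth r}<r^2=\sigma^2 .
\]
So $\E F(b_m)<\frac{m+1}{m}\sigma^2$ for all large $m$, your recursion cannot be closed by this route, and what your argument actually delivers is only $\E d^2(b_n,\bs)\le\sigma^2(1+\log n)/n$. (The intermediate target $\E[\inf_b F_n(b)]\ge\frac{n-1}{n}\sigma^2$ may well be true --- it holds in this example --- but you provide no valid argument for it, and it cannot be extracted from upper bounds on barycenter displacements alone.)

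Your closing claim that the lower curvature bound $\kappa$ is only cosmetic is also the opposite of how the paper proceeds: Corollary~\ref{cor:negcurv} is proved inside the tangent cone at $\bs$, and the hypothesis $\curv(\Sp)\ge\kappa$ is what makes that possible. By Theorem~\ref{thm:toolbox}, $\log_{\bs}$ maps $\supp(\P)$ into a Hilbert subspace $\lin_{\bs}\Sp\subset T_{\bs}\Sp$ and $\bs$ is an exponential barycenter, i.e.\ $\int\log_{\bs}(x)\,\ud\P(x)=0$; combining the population and empirical variance equalities (Theorem~\ref{thm:varequal}) yields inequality~\eqref{eq:main}, $d(b_n,\bs)\,\Pn[k_{\bs}^{b_n}(\bcdot)+k_{b_n}^{\bs}(\bcdot)]\le 2\|\bs_n\|_{\bs}$, where $\bs_n$ is the Hilbert-space average of the $\log_{\bs}(X_i)$; nonpositive curvature gives $k\ge1$ in both terms (Remark~\ref{rem:pkpos}), hence $d(b_n,\bs)\le\|\bs_n\|_{\bs}$, and \eqref{eq:main22} gives $\E\|\bs_n\|_{\bs}^2=\sigma^2/n$ exactly. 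The paper explicitly warns (in the discussion of the tripod after Corollary~\ref{cor:negcurv}) that this mechanism breaks without the lower bound, precisely because the exponential-barycenter property of Theorem~\ref{thm:toolbox}-\ref{it:baryisexpbary} can fail. A completed version of your recursion would be a genuinely different, tangent-cone-free proof; as written, it rests on a false lemma, so there is a genuine gap.
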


The above parametric rate should be contrasted with the slower rates obtained in~\cite{AhiGouPar18} and~\cite{Sh18} but that hold without curvature lower bounds.

\section{Geometry and barycenters}
\label{sec:gb}
\subsection{Preliminaries}
\label{subsec:prelim}

We gather important notation and standard facts from metric geometry for reference. We refer the reader to~\cite{BurBurIva01} and~\cite{AleKapPet19} for an introduction.

Throughout, let $(S,d)$ be a (Polish) geodesic metric space (see Remark \ref{rem:mes}).
For all $x,y\in\Sp$, we call (constant-speed) geodesic connecting $x$ to $y$ any continuous path $\gamma:[a,b]\to\Sp$ such that $\gamma(a)=x$, $\gamma(b)=y$ and such that, for all $a\le s\le t\le b$, \[d(\gamma(s),\gamma(t))=\frac{t-s}{b-a}d(x,y).\] 
Given $\kappa\in \R$, we denote by $(M^2_{\kappa},d_{\kappa})$ the complete and simply connected $2$-dimensional Riemannian manifold with constant sectional curvature $\kappa$. The diameter $D_{\kappa}$ of $M^2_{\kappa}$ is $D_{\kappa} =+\infty$ if $\kappa\le 0$ and $D_{\kappa}=\pi/\sqrt{\kappa}$ if $\kappa>0$. We define a triangle in $\Sp$ as any set of three points $\{p,x,y\}\subset\Sp$. For $\kappa\in\R$, a comparison triangle for $\{p,x,y\}\subset S$ in $M^2_{\kappa}$ is an isometric embedding $\{\bar p,\bar x,\bar y\}\subset M^2_{\kappa}$ of $\{p,x,y\}$ in $M^2_{\kappa}$. Such a comparison triangle always exists and is unique (up to an isometry) provided $\mathrm{peri}\{p,x,y\}=d(p,x)+d(p,y)+d(x,y)< 2D_{\kappa}$. We recall the definition of curvature bounds for reference.
\begin{defi}
\label{def:curvk}
Let $\kappa\in \R$.  We say that ${\rm curv}(\Sp)\ge\kappa$ if for any triangle $\{p,x,y\}\subset\Sp$ with $\mathrm{peri}\{p,x,y\}< 2D_{\kappa}$, any comparison triangle $\{\bar p,\bar x,\bar y\}\subset M^2_{\kappa}$ and any geodesic $\gamma:[0,1]\to\Sp$ connecting $x$ to $y$ in $\Sp$, we have
\begin{equation}
\label{boundkappabelow}
d(p,\gamma(t))\ge d_{\kappa}( \bar p,\bar\gamma(t)),
\end{equation}
for all $t\in[0,1]$, where $\bar\gamma:[0,1]\to M^2_{\kappa}$ is the unique geodesic connecting $\bar x$ to $\bar y$ in $M^2_{\kappa}$. We say that $\curv(\Sp)\le \kappa$ if the same holds with converse inequality in \eqref{boundkappabelow}.
\end{defi}

Comparison angles allow to provide useful characterisations of curvature bounds.
Given $p,x,y\in\Sp$ with $p\notin\{x,y\}$ and $\textrm{peri}\{p,x,y\}<2D_{\kappa}$, we define the \emph{comparison angle} $\sphericalangle^{\kappa}_p(x,y)\in[0,\pi]$ at $p$ by 
 \[\cos\sphericalangle^{\kappa}_p(x,y):=\left\{
 \begin{array}{ll}
 \dfrac{d^2(p,x)+d^2(p,y)-d^2(x,y)}{2d(p,x)d(p,y)}&\mbox{ if }\kappa=0,\vspace{0.2cm}\\
 \dfrac{c_{\kappa}(d(x,y))-c_{\kappa}(d(p,x))\cdot c_{\kappa}(d(p,y))}{\kappa\cdot s_{\kappa}(d(p,x))s_{\kappa}(d(p,y))}&\mbox{ if }\kappa\ne 0,
 \end{array}
 \right.\]
  where, for $r\ge 0$,
 \begin{equation}
 \label{def:csk}
 s_{\kappa}(r):=\left\{
 \begin{array}{ll}
 \sin(r\sqrt\kappa)/\sqrt\kappa &\mbox{ if }\kappa>0,\\
 \sinh(r\sqrt{-\kappa})/\sqrt{-\kappa}&\mbox{ if }\kappa< 0,
 \end{array}
 \right.
 \end{equation}
and $c_{\kappa}(r)=s'_{\kappa}(r)$. 

{The next result can be found in~\cite{BurGroPel92}}.

\begin{thm}
\label{thm:curvequiv}
Let $(\Sp,d)$ be a geodesic space and $\kappa\in \R$. Then the following statements are equivalent.
\begin{enumerate}[label=\rm{(\arabic*)}]
    \item $\curv(\Sp)\ge \kappa$ in the sense of Definition \ref{def:curvk}.
    \item\label{it:quadruple} For all $p,x,y,z\in\Sp$ such that $p\notin\{x,y,z\}$,
\[\sphericalangle^{\kappa}_{p}(x,y)+\sphericalangle^{\kappa}_{p}(x,z)+\sphericalangle^{\kappa}_{p}(y,z)\le 2\pi.\]
 \item\label{it:anglemonotonicity} For any $p,x,y\in\Sp$ with $p\notin\{x,y\}$ any geodesics $\gamma_{x},\gamma_y:[0,1]\to\Sp$ from $p$ to $x$ and from $p$ to $y$ respectively, the function
 \[(s,t)\mapsto\sphericalangle^{\kappa}_p(\gamma_x(s),\gamma_y(t)),\]
 is non-increasing in each variable when the other is fixed.
\end{enumerate}
\end{thm}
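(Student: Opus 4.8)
The plan is to prove the three statements equivalent by establishing the cycle (1) $\Rightarrow$ (3) $\Rightarrow$ (2) $\Rightarrow$ (1). Two elementary facts about the model spaces $M^2_\kappa$ will do essentially all the work: first, the \emph{law of cosines monotonicity}, namely that with the two adjacent side lengths fixed the comparison angle $\sphericalangle^\kappa_p(x,y)$ is a strictly increasing function of the length $d(x,y)$ of the opposite side; and second, \emph{Alexandrov's lemma}, which compares a pair of model triangles glued along a common side with a single model triangle, and whose degenerate form shows that for $\bar m$ on a segment $[\bar x,\bar y]$ the position of $\bar m$ relative to an external vertex $\bar p$ is governed entirely by the two adjacent angles at $\bar m$.

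For (1) $\Rightarrow$ (3), fix geodesics $\gamma_x,\gamma_y$ from $p$, a point $q=\gamma_x(s)$, and parameters $t_1\le t_2$; I want $\sphericalangle^\kappa_p(q,\gamma_y(t_1))\ge\sphericalangle^\kappa_p(q,\gamma_y(t_2))$. Apply Definition~\ref{def:curvk} to the triangle $\{q,p,\gamma_y(t_2)\}$ with distinguished vertex $q$ and the geodesic $\gamma_y|_{[0,t_2]}$ from $p$ to $\gamma_y(t_2)$; this gives $d(q,\gamma_y(t_1))\ge d_\kappa(\bar q,\bar\gamma_y(t_1))$, where $\bar\gamma_y(t_1)$ lies on the side $[\bar p,\bar\gamma_y(t_2)]$ of the comparison triangle. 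The sub-triangle $\{\bar p,\bar q,\bar\gamma_y(t_1)\}$ has angle $\sphericalangle^\kappa_p(q,\gamma_y(t_2))$ at $\bar p$ with adjacent sides of lengths $d(p,q)$ and $d(p,\gamma_y(t_1))$, whereas the actual comparison angle $\sphericalangle^\kappa_p(q,\gamma_y(t_1))$ shares these two adjacent sides but has a weakly longer opposite side; the law of cosines monotonicity then yields the claim. Monotonicity in $s$ follows by symmetry, proving (3).

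For (3) $\Rightarrow$ (2), monotonicity shows that for any choice of geodesics the comparison angle is dominated by the Alexandrov angle, $\sphericalangle^\kappa_p(x,y)\le \angle_p(\gamma_x,\gamma_y)$, so it suffices to prove that the three Alexandrov angles of the hinges at $p$ sum to at most $2\pi$. This is the crux: I would develop the three geodesics emanating from $p$ into the model space by Alexandrov's lemma, laying the consecutive hinges side by side, and argue via the first variation of arclength together with monotonicity that the developed fan cannot wind past a full turn. Finally, for (2) $\Rightarrow$ (1), given a triangle $\{p,x,y\}$, a geodesic $\gamma$ from $x$ to $y$, and $m=\gamma(t)$, apply (2) to the quadruple consisting of $m$ and $\{p,x,y\}$. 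Because $m$ lies on the geodesic, $\sphericalangle^\kappa_m(x,y)=\pi$, so (2) collapses to the adjacent-angle inequality $\sphericalangle^\kappa_m(p,x)+\sphericalangle^\kappa_m(p,y)\le \pi$; the degenerate case of Alexandrov's lemma (equivalently, a direct law of cosines computation in $M^2_\kappa$) shows this to be exactly equivalent to the point-on-side bound $d(p,m)\ge d_\kappa(\bar p,\bar m)$, which is (1).

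The main obstacle is the step (3) $\Rightarrow$ (2): unlike the other two implications it is not a two-triangle computation but requires globalizing three hinges at once, and the $2\pi$ bound genuinely uses the curvature hypothesis rather than the mere triangle inequality for Alexandrov angles (which only yields that each angle is at most the sum of the other two). A secondary but pervasive technical point is the bookkeeping of the perimeter constraint $\mathrm{peri}<2D_\kappa$, so that every comparison triangle invoked actually exists and is unique, together with the possible non-uniqueness of geodesics in $\Sp$ — harmless here, since the comparison angles in (2) depend only on the distances while the geodesics in (3) may be chosen arbitrarily.
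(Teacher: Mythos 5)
First, a point of reference: the paper itself gives no proof of Theorem~\ref{thm:curvequiv}. It is stated as standard background from metric geometry, with the reader referred to \cite{BurBurIva01} and \cite{AleKapPet19}, so your proposal can only be measured against the standard arguments those references contain, not against an in-paper argument.

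Two of your three legs are essentially correct and match the standard proofs. For $(1)\Rightarrow(3)$: applying Definition~\ref{def:curvk} to the triangle $\{q,p,\gamma_y(t_2)\}$ with distinguished vertex $q$, noting that the comparison point of $\gamma_y(t_1)$ sits on the side $[\bar p,\bar\gamma_y(t_2)]$ at distance $d(p,\gamma_y(t_1))$ from $\bar p$, and then invoking monotonicity of the model angle in the opposite side length is exactly the classical argument. For $(2)\Rightarrow(1)$: taking the quadruple $(m;p,x,y)$ with $m$ interior to the geodesic, using the degenerate identity $\sphericalangle^{\kappa}_m(x,y)=\pi$ to collapse the quadruple inequality to $\sphericalangle^{\kappa}_m(p,x)+\sphericalangle^{\kappa}_m(p,y)\le\pi$, and converting this into the point-on-side bound via Alexandrov's lemma is also the standard route (modulo the perimeter bookkeeping for $\kappa>0$, which you rightly flag as routine).

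The genuine gap is $(3)\Rightarrow(2)$, precisely where you write ``this is the crux.'' The reduction is fine: under monotonicity, $\sphericalangle^{\kappa}_p(x,y)\le\sphericalangle_p(\gamma_x,\gamma_y)$ for any choice of geodesics. But the remaining claim---that the three Alexandrov angles of geodesics emanating from $p$ sum to at most $2\pi$---is asserted, not proven: the sentence about developing hinges so that ``the fan cannot wind past a full turn'' restates the conclusion in different words, and it is not clear how a first-variation argument would produce it. This step cannot be soft, for three reasons. (i) The claim is false without curvature: in the tripod, the three pairwise angles at the center are each $\pi$, so the sum is $3\pi$, even though Alexandrov angles exist and satisfy the triangle inequality; so genuine comparison input is required. (ii) The only inequality you have extracted from (3) is comparison angle $\le$ Alexandrov angle, which bounds angles \emph{from below} by comparison quantities and therefore goes the wrong way for an \emph{upper} bound on an angle sum. (iii) Applying the quadruple inequality to points $\gamma_x(s),\gamma_y(t),\gamma_z(u)$ near $p$ is circular, since that inequality is exactly what is being proven. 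The standard repair is more involved: one first shows, from the point-on-side comparison via Alexandrov's lemma together with a limiting argument, that at any interior point $q$ of a geodesic the two adjacent Alexandrov angles formed with any third geodesic sum to exactly $\pi$ (the ``$\le\pi$'' half is the curvature input); one then runs Alexandrov's globalization argument---as in Burago--Gromov--Perelman, or in the proof in \cite{AleKapPet19} that hinge comparison implies the four-point comparison---combining this adjacent-angle property with the triangle inequality for angles and semicontinuity of angles. Until that argument (or an equivalent) is supplied, what you have is $(1)\Rightarrow(3)$, $(2)\Rightarrow(1)$, and a correct reduction of $(3)\Rightarrow(2)$ to an unproven theorem, so the cycle does not close.
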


Tangent cones are fundamental objects around which revolve many ideas of the paper. While they reduce to more familiar tangent spaces when $S$ is a smooth manifold, tangent cones provide a natural proxy for tangent spaces in the context of abstract metric structures.  Given $p\in \Sp$, let $\Gamma_p$ be the set of all geodesics $\gamma:[0,1]\to S$ with $\gamma(0)=p$. For $\gamma,\sigma\in\Gamma_p$, the \emph{Alexandrov angle} $\sphericalangle_p(\gamma,\sigma)$ is given by
\[\sphericalangle_p(\gamma,\sigma)= \lim_{s,t\to 0}\sphericalangle^{0}_p(\gamma(s),\sigma(t)).\]
Whenever $\mathrm{curv}(\Sp)\ge \kappa$ or $\mathrm{curv}(\Sp)\le \kappa$, for $\kappa\in\R$, the angle $\sphericalangle_p:\Gamma^2_p\to [0,\pi]$ exists and is a pseudo-metric on $\Gamma_p$. Hence, it induces a metric, still denoted $\sphericalangle_p$, on the quotient space $\Gamma_p/\sim$ where $\gamma\sim\sigma$ if and only if $\sphericalangle_p(\gamma,\sigma)=0$.
The completion $(\Sigma_p,\sphericalangle_p)$ of $(\Gamma_p/\sim,\sphericalangle_p)$ is called the \emph{space of directions} at $p$.
The \emph{tangent cone} $T_p\Sp$ of $(\Sp,d)$ at $p$ is the Euclidean cone over the space of directions $\Sigma_p$, i.e., the quotient space $\Sigma_p\times\R^+/\approx$ where $(\gamma,s)\approx(\sigma,t)$ if and only if $s=t=0$ or $(\gamma,s)=(\sigma,t)$.
For $(\gamma,s)\in \Sigma_p\times\R^+$, we denote $[\gamma,s]$ the corresponding element in $T_pS$ and we call $o_p=[\gamma,0]$ the tip of the cone.
For all $u=[\gamma,s],v=[\sigma,t]\in T_p S$, we set\footnote{Observe that we often denote paths and directions by the same letters.
For any two directions $\gamma,\sigma\in \Sigma_p$, the angle $\sphericalangle_p(\gamma,\sigma)$ is defined without ambiguity as the Alexandrov angle between any two representatives of these directions.}
\[\|u-v\|^2_p=s^2+t^2-2st\cos\sphericalangle_p(\gamma,\sigma).\]
We also use notation $\|u\|_p=\|u-o_p\|_p$ and $\langle u,v\rangle_p= st\cos \sphericalangle_p(\gamma,\sigma)$ so that 
\begin{equation}
\label{eq:polarize}    
\|u-v\|^2_p=\|u\|^2_p+\|v\|^2_p-2\langle u,v\rangle_p\,.
\end{equation}
The map $(u,v)\mapsto \|u-v\|_p$ is a metric on $T_p\Sp$ called the \emph{cone metric}. The cone structure of $T_p\Sp$ is defined, for $u=[\gamma,s]$ and $\lambda\ge 0$, by $\lambda\cdot u=[\gamma, \lambda s]$.

Let $C_p\subset\Sp$ denote the cut-locus of $p$ and for all $x\in\Sp\setminus C_p$, let $\uparrow^x_{p} \in \Sigma_p$ denote the direction 

of the unique geodesic connecting $p$ to $x$. Then, the \emph{log map} at $p$ is the map from $\Sp\setminus C_p$ to $ T_p\Sp$ that is defined by
 \[\log_p(x)= [\uparrow^{x}_p,d(p,x)]\,.\]
 
This definition can be extended to $x \in C_p$ by selecting an arbitrary direction from $p$ to $x$.

\begin{rem}
\label{rem:mes}
In the rest of this paper, we integrate functionals of the log map and now comment on its measurability.
Note first that the map $\log_p: S \to T_p S$ can be chosen to be measurable whenever the tangent cone $T_p\Sp$ is equipped with the $\sigma$-algebra $\mathfrak{S}_B$ generated by the closed balls.
This observation follows from a straightforward adaptation of Lemmas~3.3 and~4.2 in~\cite{Oht12} and implies Borel-measurability of the log map if the tangent cone $T_p\Sp$ is separable, since, in that case, $\mathfrak{S}_B$ coincides with the Borel $\sigma-$algebra.

In particular, when $\Sp=\mathcal P_2(X)$ is the $2$-Wasserstein space over a separable space $X$, this condition follows from the characterization of $T_p\mathcal P_2(X)$ in~\cite[Section 12.4]{AmbGigSav05}. 
It is unclear whether general conditions on $\Sp$ imply separability of the tangent cone---in fact, this question was raised in~\cite{Oht12} in the case where $\Sp$ is proper---or even Borel-measurability of the log map. Fortunately,
it can be checked that our arguments require Borel measurability not of $\log_p$ itself, but only of maps of the form $\langle \log_p(\cdot),b\rangle_p$ for some $b\in T_p\Sp$, which, in turn, follows from the $\mathfrak{S}_B$-measurability of $\log_p$.

\end{rem}
A useful consequence of the monotonicity property, Theorem \ref{thm:curvequiv} point \ref{it:anglemonotonicity}, is the following. If $\curv(\Sp)\ge 0$ (resp $\le 0$), then for any $x,y,p\in\Sp$ the metric $d$ and cone metric $\|\cdot-\cdot\|_{p}$ satisfy
\begin{equation}
    \label{eq:comparemetrics}
d(x,y)\le \|\log_p(x)-\log_p(y)\|_p\,,\quad\mbox{(resp. $\ge$)},
\end{equation}
with equality if $x=p$ or $y=p$.

\subsection{Tangent cone at a barycenter}

The notation $\|\cdot\|_p$ and $\langle \cdot,\cdot\rangle_p$ suggests that the cone $T_p\Sp$ possesses a Hilbert-like structure.
However,  the tangent cone may fail to be even geodesic whenever $S$ has only lower bounded curvature (see \cite{Hal00}).
To overcome this limitation, we gather facts on the structure of $T_pS$, that are used in the sequel, in particular when $p$ is a barycenter.

\begin{thm}
\label{thm:toolbox}
Let $\kappa\in \R$ and let $(\Sp,d)$ be a geodesic space with $\mathrm{curv}(\Sp)\ge \kappa$. Then the following holds:
\begin{enumerate}[label=\rm{(\arabic*)}]
    \item\label{it:tanlowbound} For any $p\in \Sp$, the tangent cone $T_p\Sp$ can be isometrically embedded in a geodesic space $\mathcal T_p\Sp$, such that $\mathrm{curv}(\mathcal T_p\Sp)\ge 0$.
    \item\label{it:baryisexpbary} For any $\P\in \PS$ and any barycenter $\bs\in\Sp$ of $P$,
    \[\iint\langle\log_{\bs}(x),\log_{\bs}(y)\rangle_{\bs}\ud P(x)\ud P(y)=0.\]
    \item\label{it:suppflat} For any $\P\in \PS$ and any barycenter $\bs$ of $\P$,  there exists a subset $\lin_{\bs}\Sp\subset T_{\bs}\Sp$, which is a Hilbert space when equipped with the restricted cone metric, and such that $\log_{\bs}(\supp(P))\subset \lin_{\bs}\Sp$.
    \item\label{it:espscal} For any $\P\in\PS$, any barycenter $\bs$ of $\P$, any $\Q\in\PS$ with $\log_{\bs}(\supp(Q))\subset \lin_{\bs}\Sp$ and any $b\in \Sp$, we have
    \[\int\langle\log_{\bs}(x),\log_{\bs}(b)\rangle_{\bs}\ud Q(x)=\langle u ,\log_{\bs}(b)\rangle_{\bs},\]
    where $u$ is the (Pettis) integral $\int v \ud \bar Q(v), \bar Q=(\log_{\bs})_{\#} Q$.
\end{enumerate}
\end{thm}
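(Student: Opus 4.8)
The plan is to establish the four items in the logical order (1), (3), (4), (2), since the Hilbert structure produced by (3) and (4) is exactly what is needed to close the balance identity (2).

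\emph{Item (1).} The tangent cone $T_p\Sp$ is the Euclidean cone over the space of directions $\Sigma_p$. The starting point is the classical Alexandrov fact that $\curv(\Sp)\ge\kappa$ forces $\curv(\Sigma_p)\ge 1$, and that the Euclidean cone over a space of curvature $\ge 1$ has curvature $\ge 0$; this already furnishes all nonnegative-curvature comparison inequalities on $T_p\Sp$. The only missing feature is that $T_p\Sp$ need not be geodesic, so it cannot itself serve as $\mathcal{T}_p\Sp$. To repair this I would realize $T_p\Sp$ inside a blow-up: take an ultralimit $\mathcal{T}_p\Sp:=\lim_\omega(\Sp,\lambda_i d,p)$ with $\lambda_i\to\infty$. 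Each rescaled space has $\curv\ge\kappa/\lambda_i^2\to 0$, and since both curvature lower bounds and the geodesic property pass to ultralimits, $\mathcal{T}_p\Sp$ is geodesic with $\curv\ge 0$, and the canonical map $T_p\Sp\to\mathcal{T}_p\Sp$ is an isometric embedding.

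\emph{The balance inequality.} Fix a barycenter $\bs$. The workhorse is the first variation formula: for a unit-speed geodesic $\gamma_w$ issuing from $\bs$ in direction $w$, $\tfrac{\ud}{\ud t}\big|_{0^+}\tfrac12 d^2(\gamma_w(t),x)=-\langle\log_{\bs}(x),w\rangle_{\bs}$. Differentiating $\tfrac12\int d^2(\gamma_w(t),x)\,\ud P$ under the integral (dominated by $d(\bs,x)\in L^1(P)$) and using minimality of $\bs$ gives $\int\langle\log_{\bs}(x),w\rangle_{\bs}\,\ud P(x)\le 0$ for every geodesic direction $w$, which extends to all of $\Sigma_{\bs}$ by continuity. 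Taking $w=\uparrow^{y}_{\bs}$ and integrating over $y$ already yields one half of (2), namely $\iint\langle\log_{\bs}x,\log_{\bs}y\rangle_{\bs}\,\ud P\,\ud P\le 0$. The same inequality shows that, inside the geodesic cone $\mathcal{T}_{\bs}\Sp$ of item (1), the tip $o_{\bs}$ is a barycenter of $\bar P=(\log_{\bs})_\# P$ and satisfies the sharp variance inequality $\int\|c-v\|_{\bs}^2\,\ud\bar P(v)\ge\sigma^2+\|c\|_{\bs}^2$, which is the right setting for the rigidity below.

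\emph{Item (3), the crux.} I would show that the kernel $K(x,y)=\langle\log_{\bs}x,\log_{\bs}y\rangle_{\bs}$ is positive semidefinite on $\supp P$; then the cone metric restricted to $\log_{\bs}(\supp P)$ is Hilbertian, and its closed span inside $\mathcal{T}_{\bs}\Sp$ is the desired flat space $\lin_{\bs}\Sp$. Positive semidefiniteness is precisely a flatness statement: the support directions must lie on a round subsphere of $\Sigma_{\bs}$. This is where minimality must be combined with the curvature bound, since nonnegative curvature alone only gives that Alexandrov angles dominate comparison angles (Theorem~\ref{thm:curvequiv}), so the configuration is a priori ``fatter than flat'' and the barycenter condition must force the reverse inequalities to be equalities. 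Concretely, I would argue by contradiction: a finite signed combination witnessing $\sum_{i,j}c_ic_jK(x_i,x_j)<0$ should be convertible, working inside the geodesic cone $\mathcal{T}_{\bs}\Sp$, into a competitor $b$ with $\int d^2(b,x)\,\ud P<\sigma^2$, contradicting that $\bs$ is a barycenter. \textbf{The main obstacle} is exactly producing such a competitor with a genuine strict decrease: because only a lower curvature bound is available, distances to a competitor can be controlled only from below, so the decrease must be extracted from the exact cone geometry at the tip together with optimality; this is why one works in $\mathcal{T}_{\bs}\Sp$ rather than in the possibly non-geodesic $T_{\bs}\Sp$, and this is also where the infinite-dimensional and measurability subtleties of Remark~\ref{rem:mes} must be handled.

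\emph{Items (4) and (2).} Granting (3), statement (4) is routine Hilbert-space measure theory: $\bar Q=(\log_{\bs})_\# Q$ is supported on the Hilbert space $\lin_{\bs}\Sp$, so the Pettis integral $u=\int v\,\ud\bar Q(v)$ exists there, and since $v\mapsto\langle v,\log_{\bs}b\rangle_{\bs}$ is bounded and linear on $\lin_{\bs}\Sp$ (boundedness from $|\langle v,\log_{\bs}b\rangle_{\bs}|\le\|v\|_{\bs}\|\log_{\bs}b\|_{\bs}$), the defining property of the Pettis integral gives $\int\langle\log_{\bs}x,\log_{\bs}b\rangle_{\bs}\,\ud Q=\langle u,\log_{\bs}b\rangle_{\bs}$. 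Finally, applying (4) with $Q=P$ twice and using bilinearity of the genuine inner product on $\lin_{\bs}\Sp$ gives $\iint\langle\log_{\bs}x,\log_{\bs}y\rangle_{\bs}\,\ud P\,\ud P=\langle u,u\rangle_{\bs}=\|u\|_{\bs}^2\ge 0$, and combining this with the reverse inequality from the balance step forces the double integral to vanish, which is (2).
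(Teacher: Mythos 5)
Your item (1) and the $\le$ half of item (2) do match the paper: $\mathcal T_p\Sp$ is precisely the ultratangent cone (ultralimit of blowups, cited from \cite{AleKapPet19}), and the paper's proof of $\iint\langle\log_{\bs}(x),\log_{\bs}(y)\rangle_{\bs}\,\ud P(x)\ud P(y)\le 0$ is the same first-variation argument, carried out with comparison angles to justify the limit. The fatal problem is item (3), which in your ordering everything else depends on ((4) and the $\ge$ half of (2)): you describe a contradiction strategy --- a signed combination with $\sum_{i,j}c_ic_jK(x_i,x_j)<0$ should yield a competitor $b$ with $\int d^2(b,x)\,\ud P(x)<\sigma^2$ --- and then explicitly flag that you cannot produce the competitor. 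That construction is exactly the hard rigidity step, so the proposal stops at the crux rather than resolving it. Note that the paper does not prove this rigidity from scratch either; it inverts your logical order: it proves (2) first, getting the $\ge$ direction from the Lang--Schroeder finite-point inequality $(\Pn\otimes\Pn)f\ge 0$ for finitely supported measures (valid at \emph{any} point, not just barycenters, \cite{LanSch97}) together with an $L^2$ approximation by empirical measures, and only then obtains (3) by invoking the structure theorem for the linear part of a tangent cone (vectors admitting opposites form a Hilbert space, Theorem 11.6.4 in \cite{AleKapPet19}) combined with \cite{Le-19}, which converts the balance identity (2) into the inclusion $\log_{\bs}(\supp(P))\subset\lin_{\bs}\Sp$. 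A smaller issue in the same step: the ``closed span inside $\mathcal T_{\bs}\Sp$'' need not be a subset of $T_{\bs}\Sp$, whereas the statement (and its later use in Theorem \ref{thm:master}, where brackets pair $\bs_n\in\lin_{\bs}\Sp$ with $\log_{\bs}(b_n)$) requires $\lin_{\bs}\Sp\subset T_{\bs}\Sp$.

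Item (4) is also not ``routine Hilbert-space measure theory,'' and your one-line justification assumes the conclusion. The functional $v\in\lin_{\bs}\Sp\mapsto\langle v,\log_{\bs}(b)\rangle_{\bs}$ is indeed bounded by Cauchy--Schwarz, but it is \emph{not} linear a priori: for general $b\in\Sp$ the vector $\log_{\bs}(b)$ lies outside $\lin_{\bs}\Sp$, and the cone bracket is defined through Alexandrov angles, so only its restriction to $\lin_{\bs}\Sp\times\lin_{\bs}\Sp$ is an inner product. Establishing the Pettis/Jensen identity against such an external direction is the entire content of the paper's proof of (4): convexity of $v\mapsto\langle v,b\rangle_{\bs}$ on $\lin_{\bs}\Sp$ (using $\curv(\mathcal T_{\bs}\Sp)\ge 0$), a subgradient inequality at the Pettis integral, the same argument for $v\mapsto\langle -v,b\rangle_{\bs}$, and finally the identity $\langle -v,b\rangle_{\bs}=-\langle v,b\rangle_{\bs}$, proved via the triangle inequality for angles together with the quadruple condition of Theorem \ref{thm:curvequiv}-\ref{it:quadruple}. (Your derivation of the $\ge$ half of (2) only pairs vectors that both lie in $\lin_{\bs}\Sp$, so that particular step would survive once (3) is in place; but item (4) as stated, for arbitrary $b$ --- which is how it is used later with $b=b_n$ --- remains unproven in your proposal.)
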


\begin{proof}
\ref{it:tanlowbound} This statement follows directly by combining Claim 3.3.2,  Theorem 3.4.1 and Theorem 11.3.1 in \cite{AleKapPet19}. Note that $\mathcal T_p\Sp$ is formally described as the ultralimit of blowups of $\Sp$ at point $p$ and called the \emph{ultratangent cone. }

\ref{it:baryisexpbary}  Let $X_1,\dots,X_n$ be i.i.d. with distribution $\P$ and let ${\P}_n=n^{-1}\sum_{i=1}^n\delta_{X_i}$. Let $f:\Sp^2 \to \R$ be defined by $f(x,y)=\langle\log_{\bs}(x),\log_{\bs}(y)\rangle_{\bs}$ and observe that $f \in L^2(\P\otimes\P)$. Moreover, explicit computations show that
 \[\esp|(\P\otimes\P)f-(\P_n\otimes \P_n)f|^2\underset{n\to+\infty}{\longrightarrow}0,\]
 so that 
 there exists a subsequence  $(\P_{n_k})$ such that $(\P_{n_k}\otimes \P_{n_k})f$ converges almost surely to $(\P\otimes \P)f$. Since $\P_{n_k}$ is finitely supported, it follows from~\cite[Proposition~3.2]{LanSch97} that $(\P_{n_k}\otimes \P_{n_k}) f\ge 0$. Therefore,  $(\P\otimes\P)f\ge 0$. Note that this first inequality holds even when $\bs$ is not a barycenter.
 
 To complete the proof, we prove the reverse inequality.

Without loss of generality, suppose that $\kappa<0$. Then, combining claim 1.1.1.d and Lemma 5.3.1 in \cite{AleKapPet19}, we deduce there exists a constant $c>0$ (depending only on $\kappa$) such that, for all $x,y\in \Sp$,
  \begin{equation}
  \label{eq:ineqangles}
  \sphericalangle^{\kappa}_{\bs}(x,y)\le \sphericalangle^0_{\bs}(x,y)\le \sphericalangle^{\kappa}_{\bs}(x,y)+ cd(\bs,x)d(\bs,y).
  \end{equation}
Now for all $x,y\in \Sp$, let $\gamma_x$ and $\gamma_y$ be two geodesics connecting $\bs$ to $x$ and $y$ respectively and with respective directions $\uparrow^x_{\bs}$ and $\uparrow^y_{\bs}$. Then, letting $\langle x,y\rangle^{0}_{\bs} = d(\bs,x)d(\bs,y)\cos\sphericalangle^{0}_{\bs}(x,y)$, we get
  \begin{align}
      \langle \log_{\bs}(x),\log_{\bs}(y)\rangle_{\bs}&=d(\bs,x)d(\bs,y)\cos\sphericalangle_{\bs}(\uparrow^x_{\bs},\uparrow^y_{\bs})
      \nonumber\\
      &=\lim_{s,t\to 0} d(\bs,x)d(\bs,y)\cos\sphericalangle^{0}_{\bs}(\gamma_x(s),\gamma_y(t))
      \nonumber\\
      &\le \lim_{s,t\to 0} d(\bs,x)d(\bs,y)\cos\sphericalangle^{\kappa}_{\bs}(\gamma_x(s),\gamma_y(t))
      \nonumber\\
      &\le d(\bs,x)d(\bs,y)\cos\sphericalangle^{\kappa}_{\bs}(x,y)
      \nonumber\\
      &\le \langle x,y\rangle^{0}_{\bs}+cd^2(\bs,x)d^2(\bs,y),
      \nonumber
  \end{align}
  according to Theorem~\ref{thm:curvequiv}-\ref{it:anglemonotonicity}, inequality~\eqref{eq:ineqangles} and the Lipschitz continuity of $\cos(\cdot)$.
  Now observe that, for all $x\in\Sp$ and all $t\in(0,1
  )$,
  \begin{align*}
      &d^2(x,\gamma_y(t))-d^2(\bs,x)\\
      &=d^2(\bs,\gamma_y(t))-2\langle x,y\rangle^{0}_{\bs}\\
      &\le d^2(\bs,\gamma_y(t))-2\langle \log_{\bs}(x),\log_{\bs}(\gamma_y(t))\rangle_{\bs}+cd^2(\bs,x)d^2(\bs,\gamma_y(t))\\
      &=t^2d^2(\bs,y)-2t\langle \log_{\bs}(x),\log_{\bs}(y)\rangle_{\bs}+ct^2d^2(\bs,x)d^2(\bs,y).
  \end{align*}
  Integrating with respect to $\ud P(x)$, using the definition of a barycenter and letting $t$ go to $0$, we deduce that 
  \[\int\langle\log_{\bs}(x),\log_{\bs}(y)\rangle_{\bs}\ud P(x)\le 0\,.\]
Since this holds for all $y\in \Sp$, integrating with respect to $\ud P(y)$ gives the result.

\ref{it:suppflat} Given $p\in\Sp$ and two elements $u=[\gamma,s],v=[\sigma,t]\in T_p\Sp$, we write $u+v=0$ and say that $u$ and $v$ are \emph{opposite} to each other if ($s=t=0$) or ($s=t$ and $\sphericalangle_{p}(\gamma,\sigma)=\pi$). Then, we set $$\lin_p\Sp=\{u\in T_p\Sp:\exists \,v\in T_p\Sp,u+v=0\}\,.
$$
Given $u\in \lin_p\Sp$, there is a unique vector opposite to $u$ denoted $-u$. The fact that $\lin_p\Sp$ is a Hilbert space for the restricted cone metric follows from Theorem 11.6.4 in \cite{AleKapPet19}. The inclusion $\log_{\bs}(\supp(P))\subset \lin_{\bs}\Sp$ follows from statement~\ref{it:baryisexpbary} and  \cite{Le-19}. 

\ref{it:espscal} Fix a barycenter $\bs$ of $\P$ and some $b\in \Sp$. Note that if $b$ is in the support of $P$, it follows readily from~\ref{it:suppflat} that~\ref{it:espscal} holds. The main purpose of this proof is to show that the same holds for any $b \in \Sp$, not necessarily in $\supp(\P)$. For brevity we use the following notation:
$$
|x-y|=\|\log_{\bs}(x)-\log_{\bs}(y)\|_{\bs}\,, \quad |x|=\|\log_{\bs}(x)\|_{\bs}
$$ 
and 
$$
\langle x,y\rangle=\langle\log_{\bs}(x),\log_{\bs}(y)\rangle_{\bs}\, \quad \forall \, x,y\in\Sp\,.
$$
In addition, whenever $u\in T_{\bs}\Sp$ and $y\in\Sp$, the notation $|u-y|$, $|u|$ and  $\langle u,y\rangle$ should be understood as $\|u-\log_{\bs}(y)\|_{\bs}$, $\|u\|_{\bs}$ and $\langle u,\log_{\bs}(y)\rangle_{\bs}$ respectively. 

We first prove that $u\in \lin_{\bs}\Sp \mapsto \langle u,b\rangle$ is a convex function. To that end, fix $t\in (0,1)$, $u_0,u_1\in\lin_{\bs}\Sp$, and set $u_t=(1-t)u_0+tu_1 \in\lin_{\bs}\Sp$. The path $t\in[0,1]\mapsto u_t$ defines a geodesic in $\mathcal T_{\bs}\Sp$ according to~\ref{it:tanlowbound} and~\ref{it:suppflat}. Since $\curv(\mathcal T_{\bs}\Sp)\ge 0$ we get from Definition~\ref{def:curvk} that, for all $y\in\Sp$ and  $t\in[0,1]$, 
\[|u_t-y|^2\ge (1-t)|u_0-y|^2+t|u_1-y|^2-t(1-t)|u_0-u_1|^2,\]
with equality if $y=\bs$ since $u_t,u_0$ and $u_1$ belong to $\lin_{\bs}\Sp$. Hence, by~\eqref{eq:polarize}, we have
\begin{align*}
2\langle u_t,b\rangle&=|u_t|^2+|b|^2-|u_t-b|^2\\
&\le (1-t)(|u_0|^2+|b|^2-|u_0-b|^2)+t(|u_1|^2+|b|^2-|u_1-b|^2)\\
&=(1-t)2\langle u_0,b\rangle+t2\langle u_1,b\rangle,
\end{align*}
so that $u \mapsto \langle u, b\rangle$ is a convex function on the Hilbert space $\lin_{\bs}\Sp$.
Therefore, for all $u\in\lin_{\bs}S$, there exists $g=g(u)$ such that for all $v\in\supp Q$,
\[
\langle \log_{\bs}(v),b\rangle\ge \langle u,b\rangle +  \langle \log_{\bs}(v)-u,g\rangle.
\]
Choosing $u$ to be the Pettis integral $\int \log_{\bs} (v) \ud Q(v)$ and integrating $v$ with respect to $Q$ yields
\begin{equation}
\label{eq:Jensen}
\int\langle \log_{\bs} (v),b\rangle\ud Q(v)\ge \langle u,b\rangle,
\end{equation}
The same arguments apply to the map $u\mapsto \langle-u,b\rangle$, so that
\begin{equation}
\label{eq:Jensen2}
\int\langle -\log_{\bs} (v),b\rangle\ud  Q(v)\ge \langle -u,b\rangle.
\end{equation}
To conclude the proof, it remains to show that $\langle -v,b\rangle=-\langle v,b\rangle$ for all $v\in\lin_{\bs}\Sp$ (in particular, for $u \in \lin_{\bs}\Sp$). To that end, select $v=[\gamma,s]\in\lin_{\bs}\Sp$ and let $-v=[\sigma,s]$. Then on the one hand, using the definition of opposite points and the triangle inequality on the space of directions $\Sigma_p$, it follows that 
$
\pi=\sphericalangle_{\bs}(\gamma,\sigma)\le\sphericalangle_{\bs}(\gamma,\uparrow_{\bs}^b)+ \sphericalangle_{\bs}(\uparrow_{\bs}^b,\sigma)$. On the other hand, the fact that $\mathrm {curv}(\mathcal T_p\Sp)\ge 0$ implies that 
\[\sphericalangle_{\bs}(\gamma,\sigma)+\sphericalangle_{\bs}(\gamma,\uparrow_{\bs}^b)+ \sphericalangle_{\bs}(\uparrow_{\bs}^b,\sigma)\le 2\pi,\]
according to Theorem~\ref{thm:curvequiv}-\ref{it:quadruple}. Combining these inequalities we get that $\sphericalangle_{\bs}(\gamma,\uparrow_{\bs}^b)+ \sphericalangle_{\bs}(\uparrow_{\bs}^b,\sigma)=\pi$ so that, by definition of the bracket $\langle .,.\rangle$ and using elementary trigonometry, we get $\langle -v,b\rangle=-\langle v,b\rangle$.
\end{proof}

\section{Main results}
\label{sec:main}

We now turn to the proof of our main result: parametric and dimension-free rates for empirical barycenters on Alexandrov spaces with curvature bounded below. We study the case of positively curved spaces in more detail and, in particular, the 2-Wasserstein space. 

\subsection{Hugging}

We begin with a useful identity that controls the average curvature of the square distance around its minimum.

Given $\P\in\PS$ and a barycenter $\bs$ of $P$, define for all $x, b \in S$ the \emph{hugging function} at $\bs$ by
\begin{equation}
    \label{eq:kb}
 k^b_{\bs}(x)=1-\frac{\|\log_{\bs}(x)-\log_{\bs}(b)\|^2_{\bs}-d^2(x,b)}{d^2(b,\bs)}.
\end{equation}
Note that if $\Sp$ is a Hilbert space, then $k^b_{\bs}(x)=1$ for all $x,b\in S$. The hugging function
$k^b_{\bs}$ measures the proximity of $\Sp$ to its tangent cone $T_{\bs}\Sp$.
The next result is a generalization of Theorem 3.2 in \cite{AhiGouPar18} and demonstrates the central role of the hugging coefficient in the context of barycenters: it precisely controls the quadratic growth of the variance functional around its minimum.

\begin{thm}[Variance equality]
\label{thm:varequal}
Let $\kappa\in\R$ and $(S,d)$ be a geodesic space with $\curv(\Sp)\ge \kappa$. Let $\P\in\PS$ and $\bs$ be a barycenter of $\P$. Then, for all $b\in \Sp$,
\begin{equation}
\label{eq:vi}
d^2(b,\bs) \int k^{b}_{\bs}(x)\,{\rm d}\P(x)= \int(d^2(x,b)-d^2(x,\bs))\,{\rm d}\P(x),
\end{equation}
where $k^b_{\bs}(x)$ is as in \eqref{eq:kb}. 
\end{thm}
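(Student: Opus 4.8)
The plan is to reduce the claimed identity to the vanishing of a single cross term, which is controlled by the first-order optimality of the barycenter as encoded in Theorem~\ref{thm:toolbox}.

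First, I would unwind the definition~\eqref{eq:kb} of the hugging function. Multiplying through by $d^2(b,\bs)$ gives, for each fixed $x$,
\[
d^2(b,\bs)\, k^b_{\bs}(x) = d^2(b,\bs) - \|\log_{\bs}(x)-\log_{\bs}(b)\|^2_{\bs} + d^2(x,b).
\]
I then expand the cone distance via the polarization identity~\eqref{eq:polarize}, using that $\|\log_{\bs}(x)\|^2_{\bs}=d^2(\bs,x)$ and $\|\log_{\bs}(b)\|^2_{\bs}=d^2(\bs,b)$, since the cone norm of a log map equals the underlying distance. The two copies of $d^2(b,\bs)$ cancel, leaving the pointwise identity
\[
d^2(b,\bs)\, k^b_{\bs}(x) = d^2(x,b) - d^2(x,\bs) + 2\langle \log_{\bs}(x),\log_{\bs}(b)\rangle_{\bs}.
\]

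Second, I would integrate this identity against $\P$. Integrability poses no difficulty: by Cauchy--Schwarz in the cone, $|\langle \log_{\bs}(x),\log_{\bs}(b)\rangle_{\bs}| \le d(\bs,x)\,d(\bs,b) \in L^1(\P)$ since $\P\in\PS$, and measurability of $x\mapsto \langle \log_{\bs}(x),\log_{\bs}(b)\rangle_{\bs}$ is guaranteed by Remark~\ref{rem:mes}. After integration, the target equality~\eqref{eq:vi} is seen to be equivalent to the vanishing of the cross term
\[
\int \langle \log_{\bs}(x),\log_{\bs}(b)\rangle_{\bs}\, \ud \P(x) = 0, \qquad \text{for every } b\in\Sp.
\]

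Third, and this is the crux, I would establish this vanishing through Theorem~\ref{thm:toolbox}. Taking $Q=\P$ in part~\ref{it:espscal}---which is legitimate since $\log_{\bs}(\supp(\P))\subset \lin_{\bs}\Sp$ by part~\ref{it:suppflat}---the cross integral equals $\langle u,\log_{\bs}(b)\rangle_{\bs}$, where $u=\int \log_{\bs}(x)\,\ud \P(x)$ is the Pettis integral taken in the Hilbert space $\lin_{\bs}\Sp$. It then suffices to show that $u$ is the tip $o_{\bs}$ of the cone. To this end I compute
\[
\|u\|^2_{\bs}=\langle u,u\rangle_{\bs}=\int \langle u,\log_{\bs}(y)\rangle_{\bs}\,\ud \P(y),
\]
where the last equality is the defining property of the Pettis integral applied to the bounded linear functional $\langle u,\cdot\rangle_{\bs}$ on $\lin_{\bs}\Sp$. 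Applying part~\ref{it:espscal} once more, now with the point $b$ replaced by $y$, gives $\langle u,\log_{\bs}(y)\rangle_{\bs}=\int\langle \log_{\bs}(x),\log_{\bs}(y)\rangle_{\bs}\,\ud\P(x)$ for each $y$, so that integrating over $y$ identifies $\|u\|^2_{\bs}$ with the double integral $\iint \langle \log_{\bs}(x),\log_{\bs}(y)\rangle_{\bs}\,\ud \P(x)\,\ud \P(y)$, which is $0$ by part~\ref{it:baryisexpbary}. Hence $u=o_{\bs}$, the cross term vanishes, and~\eqref{eq:vi} follows.

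I expect this third step to be the main obstacle: the cross term does not vanish pointwise in $x$, and its integrated vanishing is genuinely the first-order stationarity of the barycenter $\bs$. The heavier geometry---embedding $T_{\bs}\Sp$ into a nonnegatively curved cone, the flatness $\log_{\bs}(\supp(\P))\subset \lin_{\bs}\Sp$, and the convexity of $u\mapsto\langle u,b\rangle_{\bs}$---has already been absorbed into Theorem~\ref{thm:toolbox}, so the remaining work is the careful chaining of parts~\ref{it:espscal} and~\ref{it:baryisexpbary} to identify the Pettis integral $u$ with the tip of the cone.
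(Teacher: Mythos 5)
Your proposal is correct and follows essentially the same route as the paper's proof: expand $d^2(b,\bs)\,k^b_{\bs}(x)$ via the cone polarization identity, integrate against $\P$, and kill the cross term by applying Theorem~\ref{thm:toolbox}-\ref{it:espscal} with $Q=\P$ together with the fact that the Pettis integral of $(\log_{\bs})_{\#}\P$ is the tip of the cone. Your third step merely makes explicit (via parts~\ref{it:baryisexpbary} and~\ref{it:espscal}) the identification of that Pettis integral with $o_{\bs}$, which the paper asserts without detail, so this is an elaboration rather than a different argument.
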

\begin{proof}We adopt the same notation as in the proof of Theorem~\ref{thm:toolbox}-\ref{it:espscal}. By definition of the cone metric $|\cdot-\cdot|$ and bracket $\langle\cdot,\cdot\rangle$ in $T_{\bs}S$,  we get
\begin{align*}
    k_{\bs}^b(x) d^2(b,\bs) &= d^2(b, \bs) + d^2(x, b) - |b - x|^2 \\
    &= d^2(x, b) - d^2(x, \bs) + 2 \langle x, b \rangle.
\end{align*}
The result follows by integrating both sides with respect to $\ud P(x)$, in Theorem~\ref{thm:toolbox}-\ref{it:espscal} with $Q=P$ and noticing that $\bs$ is the Pettis integral of $(\log_{\bs})_{\#}P$ in the Hilbert space $\lin_{\bs}S$. 
\end{proof}

\begin{rem}
\label{rem:pkpos}
By definition of a barycenter, the right hand side of identity \eqref{eq:vi} is non-negative. It follows that 
\begin{equation}
    \label{eq:intkpos}
\int k^{b}_{\bs}(x)\,{\rm d}\P(x)\ge 0\,,
\end{equation}
for all $b\ne\bs\in S$. 

Note also that for any triple $x, b, \bs$,
the curvature properties of $S$ are directly reflected in $k^b_{\bs}(x)$.
Indeed, $k^b_{\bs}(x) \ge 1$ if $S$ is non-positively curved
and $k^b_{\bs}(x) \le 1$ if $S$ is positively curved according to~\eqref{eq:comparemetrics}.

\end{rem}

\subsection{A master theorem for spaces with curvature lower bound}
\label{subsec:lbc}

This section presents general statistical guarantees for empirical barycenters, in connection to lower bounds on the function $k_{\bs}^b$. 

Throughout, we fix a probability measure $P$ with barycenter $\bs$ on a geodesic space $(\Sp,d)$. Moreover, we denote by $\sigma^2$ the variance of $P$, which is defined by
\[\sigma^2=\int d^2(\bs,x)\,{\rm d}\P(x).\]

\begin{thm} 
 \label{thm:master} Suppose that $\curv(\Sp)\ge \kappa$, for some $\kappa\in \R$. If there exists a constant $k_{\rm min}>0$ such that $k_{\bs}^b(x)\ge k_{\min}$, for all $x,b\in \Sp$, then $\bs$ is unique and any empirical barycenter $b_n$ satisfies
 \[
 \esp d^2(b_n,\bs)\le \frac{4\sigma^2}{nk_{\rm min}^2}.
 \]

\end{thm}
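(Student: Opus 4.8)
The plan is to reduce the entire argument to an elementary second-moment computation in the Hilbert space $\lin_{\bs}\Sp$ provided by Theorem~\ref{thm:toolbox}, so that no uniform concentration or empirical-process machinery is needed. I would first dispose of uniqueness: if $\bs'$ is any other barycenter of $\P$, then the variance equality~\eqref{eq:vi} applied with $b=\bs'$ has vanishing right-hand side, since both $\bs$ and $\bs'$ minimize $\int d^2(\cdot,x)\,\ud\P$, while its left-hand side is at least $k_{\min}\,d^2(\bs',\bs)$ by the pointwise lower bound on $k^{\bs'}_{\bs}$. Hence $d(\bs',\bs)=0$.

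For the rate, set $f(x)=d^2(x,b_n)-d^2(x,\bs)$ and recall the pointwise identity established in the proof of Theorem~\ref{thm:varequal}, namely $f(x)=k^{b_n}_{\bs}(x)\,d^2(b_n,\bs)-2\langle\log_{\bs}(x),\log_{\bs}(b_n)\rangle_{\bs}$. The defining property of $b_n$ as a barycenter of $\Pn$ gives $\int f\,\ud\Pn\le 0$. Integrating the identity against $\Pn$ and invoking Theorem~\ref{thm:toolbox}-\ref{it:espscal} with $Q=\Pn$ (legitimate because $\log_{\bs}(\supp\Pn)\subset\log_{\bs}(\supp\P)\subset\lin_{\bs}\Sp$ by~\ref{it:suppflat}) turns the linear term into $\langle\bar X,\log_{\bs}(b_n)\rangle_{\bs}$, where $\bar X=\int\log_{\bs}(x)\,\ud\Pn(x)=\tfrac1n\sum_{i=1}^n\log_{\bs}(X_i)$ is the Pettis integral. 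This yields
\[
d^2(b_n,\bs)\int k^{b_n}_{\bs}\,\ud\Pn \;\le\; 2\langle\bar X,\log_{\bs}(b_n)\rangle_{\bs}.
\]
The pointwise hypothesis $k^{b_n}_{\bs}\ge k_{\min}$ bounds the integral below by $k_{\min}$, and Cauchy--Schwarz in $\lin_{\bs}\Sp$ bounds the right-hand side by $2\|\bar X\|_{\bs}\,d(b_n,\bs)$. Dividing by $d(b_n,\bs)$ (the case $b_n=\bs$ being trivial) gives the self-bounding estimate $d(b_n,\bs)\le 2\|\bar X\|_{\bs}/k_{\min}$, hence $d^2(b_n,\bs)\le 4\|\bar X\|_{\bs}^2/k_{\min}^2$.

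It remains to compute $\E\|\bar X\|_{\bs}^2$. Expanding $\|\bar X\|_{\bs}^2=\tfrac1{n^2}\sum_{i,j}\langle\log_{\bs}(X_i),\log_{\bs}(X_j)\rangle_{\bs}$ and taking expectations, the $n$ diagonal terms each contribute $\E d^2(\bs,X_1)=\sigma^2$, while the off-diagonal terms vanish: by independence each equals $\langle u,u\rangle_{\bs}=\|u\|_{\bs}^2$ with $u=\int\log_{\bs}(x)\,\ud\P(x)$, and Theorem~\ref{thm:toolbox}-\ref{it:baryisexpbary} together with~\ref{it:espscal} forces $\|u\|_{\bs}^2=\iint\langle\log_{\bs}(x),\log_{\bs}(y)\rangle_{\bs}\,\ud\P\,\ud\P=0$, i.e. $u=o_{\bs}$. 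Thus $\E\|\bar X\|_{\bs}^2=\sigma^2/n$, and taking expectations in the previous estimate gives $\E d^2(b_n,\bs)\le 4\sigma^2/(nk_{\min}^2)$.

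The only genuine obstacle I anticipate is conceptual rather than computational: recognizing that the data dependence of the random center $b_n$ collapses entirely into the single Hilbert-space average $\bar X$ once one passes to the tangent cone at $\bs$, so that the usual difficulty of controlling an empirical process indexed by $b_n$ simply disappears. This rests on two observations that must be checked carefully: that the pointwise lower bound on the hugging function lets us discard the factor $\int k^{b_n}_{\bs}\,\ud\Pn$ without ever tracking its fluctuations, and that Theorem~\ref{thm:toolbox}-\ref{it:espscal} applies verbatim to the empirical measure $\Pn$, whose support lies in that of $\P$, thereby expressing $\int\langle\log_{\bs}(x),\log_{\bs}(b_n)\rangle_{\bs}\,\ud\Pn$ as an inner product against a genuine Hilbert-space mean. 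Everything else is the routine orthogonality computation above.
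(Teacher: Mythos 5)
Your proof is correct and takes essentially the same route as the paper's: both arguments collapse the problem, via the hugging identity and Theorem~\ref{thm:toolbox}, into the self-bounding estimate $k_{\rm min}\, d(b_n,\bs)\le 2\|\bar X\|_{\bs}$ with $\bar X=\frac{1}{n}\sum_{i=1}^n\log_{\bs}(X_i)$, and then use $\E\|\bar X\|_{\bs}^2=\sigma^2/n$ in the Hilbert space $\lin_{\bs}\Sp$. The only differences are cosmetic---you obtain the key inequality by integrating the pointwise identity against $\Pn$ and invoking the empirical barycenter property directly, whereas the paper combines the variance equalities for $\P$ and $\Pn$ into the exact identity~\eqref{lem:unsurn:3} and then drops the nonnegative term $\Pn k_{b_n}^{\bs}(\bcdot)$; your explicit uniqueness argument also fills in a step the paper leaves implicit.
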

\begin{proof}
Let
$k(x):=k_{\bs}^{b_n}(x)$ 
and recall the convention introduced in the proof of Theorem~\ref{thm:toolbox} concerning the use of the cone metric $\|\cdot-\cdot\|_{\bs}=|\cdot-\cdot|$ and associated bracket $\langle\cdot,\cdot\rangle_{\bs}=\langle\cdot,\cdot\rangle$. 
We also use the following standard notation:
\[
\P f(\bcdot)=\int f \ud \P\quad\mbox{and}\quad\P_nf(\bcdot)=\frac{1}{n}\sum_{i=1}^nf(X_i)\,.
\]

On the one hand, we have from the variance equality that 
\begin{align*}
  \P d^2(b_n, \bcdot)- \P d^2(\bs, \bcdot)&= d^2(b_n,\bs) \P k(\bcdot)\\
  \P_nd^2(\bs, \bcdot)- \P_nd^2(b_n, \bcdot)&= d^2(b_n,\bs) \P_n k_{b_n}^{\bs}(\bcdot).
\end{align*}
Adding these two identities yields
\begin{equation}
       \label{lem:unsurn:01} 
  (\P-\P_n)(d^2(b_n, \bcdot)-d^2(\bs, \bcdot))=     d^2(b_n,\bs)\left( \P k(\bcdot)+  \P_n k_{b_n}^{\bs}(\bcdot)\right)\,.
\end{equation}

On the other hand,  by the definition of $k$, we have
\begin{align*}
d^2(b_n, \bcdot)-d^2(\bs, \bcdot)&=(|b_n-\bcdot|^2-|\bs- \bcdot|^2)+(d^2(b_n, \bcdot)-|b_n- \bcdot|^2)\\
&= (|b_n- \bcdot|^2-|\bs- \bcdot|^2)+(k( \bcdot)-1)d^2(b_n,\bs).
\end{align*}
Hence, it follows that 
\begin{align}
(\P-\P_n)(d^2(b_n, \bcdot)-d^2(\bs, \bcdot))&=(\P-\P_n)(|b_n-\bcdot|^2-|\bs-\bcdot|^2) +d^2(b_n,\bs)(\P-\P_n)k(\bcdot).
\label{lem:unsurn:2}
\end{align}

Combining \eqref{lem:unsurn:01} and~\eqref{lem:unsurn:2} together yields
\begin{equation}
\label{lem:unsurn:3}
 d^2(b_n,\bs) \P_n [k(\bcdot)+k_{b_n}^{\bs}(\bcdot)]= (\P-\P_n)(|b_n-\bcdot|^2-|\bs-\bcdot|^2)\,.
\end{equation}

We now focus on the right-hand side of \eqref{lem:unsurn:3}. By definition of $\langle \cdot,\cdot\rangle$, it holds
\[|b_n-\bcdot|^2-|\bs-\bcdot|^2=|b_n|^2- 2\langle \bcdot,b_n\rangle.\]
Hence, by Theorem~\ref{thm:toolbox}-\ref{it:espscal} applied to $Q=P$ and $Q=P_n$, we get
\begin{align}
(\P-\P_n)(|b_n-\bcdot|^2-|\bs-\bcdot|^2)&=2\P_n \langle \bcdot,b_n\rangle
=2 \langle \bs_n,b_n\rangle\nonumber\\
&\le 2|\bs_n|\cdot|b_n|
= 2|\bs_n|d(b_n,\bs),
\label{lem:unsurn:4}
\end{align}
where $\bs_n$ stands for the barycenter of $(\log_{\bs})_{\#}(\P_n)$ in the Hilbert space $\lin_{\bs}\Sp\subset T_{\bs}\Sp$.
Combining \eqref{lem:unsurn:3} and \eqref{lem:unsurn:4}, yields
\begin{equation}
    \label{eq:main}
d(b_n,\bs)\P_n[k(\bcdot)+k_{b_n}^{\bs}(\bcdot)]\le 2|\bs_n|.
\end{equation}
Hence, since \(P_nk_{b_n}^{\bs}(\bcdot)\ge0\), if $k(x)\ge k_{\rm min}>0$ for all $x\in S$, we get by assumption that
\[
k^2_{\rm min}d^2(b_n,\bs)\le 4|\bs_n|^2.
\]

Since $(\log_{\bs})_{\#}(\P_n)$ is the empirical distribution associated to $(\log_{\bs})_{\#}(\P)$, we get by properties of averages in Hilbert spaces that
\begin{equation}
    \label{eq:main22}
    \esp|\bs_n|^2=\frac{\sigma^2}{n}\,,
\end{equation}
where we also used the fact that the variance of $(\log_{\bs})_{\#}(\P)$ is given by
\[
\int \|\log_{\bs}(x)\|_{\bs}^2 \ud \P(x)=\int d^2(x, \bs) \ud P(x)=\sigma^2\,.
\]
This completes the proof of the Theorem.

\end{proof}

As pointed out in Remark~\ref{rem:pkpos}, the condition $\curv(\Sp)\le 0$ implies that $k^{b_n}_{\bs}(x)\ge 1$ and $k^{\bs}_{b_n}(x)\ge 1$ for all $x\in S$.
The next result therefore follows readily from~\eqref{eq:main} and~\eqref{eq:main22} above.
\begin{cor} 
\label{cor:negcurv}
Suppose $(\Sp,d)$ is a geodesic space satisfying
$\kappa \le \curv(\Sp) \le 0$ for some $\kappa \le0$. Then, any distribution $\P\in\PS$ has a unique barycenter $\bs$ and any empirical barycenter $b_n$ satisfies
$$
\E d^2(b_n, \bs) \le \frac{\sigma^2}{n}.
$$
\end{cor}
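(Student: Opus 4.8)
The plan is to read the conclusion directly off the two displays \eqref{eq:main} and \eqref{eq:main22} already established inside the proof of Theorem~\ref{thm:master}, specialized to the nonpositively curved setting, together with the variance equality \eqref{eq:vi} for uniqueness. The point is that $\kappa \le \curv(\Sp)$ supplies exactly the lower curvature bound needed to invoke Theorem~\ref{thm:toolbox} and hence the entire apparatus behind \eqref{eq:main}, while the upper bound $\curv(\Sp) \le 0$ forces, via \eqref{eq:comparemetrics} and Remark~\ref{rem:pkpos}, the hugging function to satisfy $k^b_{\bs}(x) \ge 1$ for all $x,b$. With both facts in hand the argument is essentially immediate, and I would adopt throughout the same notation $|\cdot - \cdot| = \|\cdot - \cdot\|_{\bs}$ and $\langle \cdot,\cdot\rangle$ as in the proof of Theorem~\ref{thm:master}.

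First I would dispose of uniqueness. Existence of a barycenter is guaranteed under the standing assumption of the paper, so suppose $\bs$ and $b$ are both barycenters of $\P$. Then $\int d^2(\cdot,x)\,\ud\P$ takes the same minimal value $\sigma^2$ at both points, so the right-hand side of the variance equality \eqref{eq:vi} vanishes. By Remark~\ref{rem:pkpos}, nonpositive curvature gives $k^b_{\bs}(x) \ge 1$ for every $x$, whence $\int k^b_{\bs}(x)\,\ud\P(x) \ge 1 > 0$. The left-hand side of \eqref{eq:vi} equals $d^2(b,\bs)\int k^b_{\bs}\,\ud\P$, so setting it to zero forces $d(b,\bs) = 0$, i.e. $b = \bs$.

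For the rate I would begin from inequality \eqref{eq:main}, namely $d(b_n,\bs)\,\P_n[k(\bcdot) + k^{\bs}_{b_n}(\bcdot)] \le 2|\bs_n|$ with $k(x) = k^{b_n}_{\bs}(x)$. Invoking Remark~\ref{rem:pkpos} once more, both hugging functions are pointwise at least $1$, so $\P_n[k(\bcdot) + k^{\bs}_{b_n}(\bcdot)] \ge 2$. Substituting yields $2\,d(b_n,\bs) \le 2|\bs_n|$, that is $d^2(b_n,\bs) \le |\bs_n|^2$. Taking expectations and using \eqref{eq:main22}, which identifies $\E|\bs_n|^2$ with $\sigma^2/n$ through the Hilbert-space variance identity in $\lin_{\bs}\Sp$, gives $\E d^2(b_n,\bs) \le \sigma^2/n$.

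I do not expect a genuine obstacle here; the only thing to watch is that the two-sided curvature hypothesis plays two distinct roles, so one should not conflate them. The lower bound $\kappa \le \curv(\Sp)$ is what makes Theorem~\ref{thm:toolbox}, Theorem~\ref{thm:varequal} and ultimately \eqref{eq:main} meaningful (ensuring $\bs_n$, the Pettis integral, and the inclusion $\log_{\bs}(\supp\P) \subset \lin_{\bs}\Sp$ are well defined), whereas the upper bound $\curv(\Sp)\le 0$ is what delivers $k \ge 1$. Note that, unlike Theorem~\ref{thm:master}, one need not assume a positive lower bound $k_{\min}$: here $k_{\min}=1$ comes for free, and the constant improves from $4$ to $1$ precisely because both nonnegative hugging terms in \eqref{eq:main} can now be bounded below by $1$, rather than retaining only one of them.
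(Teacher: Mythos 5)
Your proposal is correct and follows essentially the same route as the paper: the paper derives Corollary~\ref{cor:negcurv} exactly by noting (via Remark~\ref{rem:pkpos}) that $\curv(\Sp)\le 0$ forces both $k^{b_n}_{\bs}\ge 1$ and $k^{\bs}_{b_n}\ge 1$, and then reading the bound off \eqref{eq:main} and \eqref{eq:main22}, including the improvement of the constant from $4$ to $1$ because both hugging terms are retained. Your explicit uniqueness argument via the variance equality \eqref{eq:vi} is a valid spelling-out of what the paper leaves implicit.
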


A careful inspection of our proof techniques reveals that the previous result extends as well to complete and simply connected Riemannian manifolds with nonpositive sectional curvature, also
known as Hadamard manifolds. Indeed, the use of curvature lower bounds in our results is twofold.
First, it guarantees a Hilbert-like structure of the tangent cone at a barycenter.
Second, we concluded by using the fact that the pushforward of $P$ under $\log_{b^*}$ has the tip of the cone as its barycenter (i.e., the fact that any barycenter is a so-called \emph{exponential barycenter}; \cite{EmeMok91}).
The former is satisfied by the manifold assumption, and the latter is the content of Theorem 2.1(b,c) in \cite{BhaPat03}, at least when $M$ is Hadamard.

Our proof technique does not extend, however, to the case of general geodesic spaces with nonpositive curvature and no curvature lower bound. 
Indeed, it can be easily checked that claim~\ref{it:baryisexpbary} in Theorem~\ref{thm:toolbox} is typically not satisfied by distributions supported on the tripod (gluing of three line segments) for example.

\subsection{Tail bound}

We now provide an extension of Theorem \ref{thm:master} to the case of more general bounds on $k^b_{\bs}$. The next result shows in particular that the requirement, made in Theorem \ref{thm:master}, that $k^b_{\bs}(x)\ge k_{\rm min}>0$, for all $x,b\in S$, can be relaxed to $k^b_{\bs}(x)\ge k_{\rm min}$, for all $x,b\in S$ and $k_{\rm min}$ possibly negative, provided that $Pk_{\bs}(\bcdot)> 0$ where 
\[k_{\bs}(x)=\min_{b}k^b_{\bs}(x).\]

For simplicity, we prove this extension under an additional integrability assumption on $P$. We say that $P$ (with barycenter $\bs$) is \emph{subgaussian} with variance proxy $\varsigma^2>0$ if
\begin{equation}
\label{eq:subg}
    \int \exp\left(\frac{d^2(\bs,x)}{2\varsigma^2}\right)\ud P(x)\le 2.
    \end{equation}
Such conditions are commonly used in high-dimensional probability and statistics~\cite{Ver18, RigHut17}.

\begin{thm} 
 \label{thm:master2} Suppose that $\curv(\Sp)\ge \kappa$, for some $\kappa\in \R$. Fix a subgaussian probability distribution $P$ on $S$ with variance proxy $\varsigma^2>0$ and barycenter $\bs$.
 Suppose that there exists $k_{\rm min}<0$ such that $k^{b}_{\bs}(x)\ge k_{\rm min}$ for all $x,b\in\Sp$ and that  $Pk_{\bs}(\bcdot)> 0$. Then $\bs$ is unique and, for any empirical barycenter $b_n$ and any $\delta\in(0,1)$, we get
 \[
d^2(b_n,\bs)\le \frac{c_1}{n}\log\left(\frac{2}{\delta}\right),
 \]
 with probability at least $1-\delta-e^{-c_2n}$ where $c_1,c_2>0$ are independent of $n$.

 \end{thm}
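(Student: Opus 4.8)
The plan is to start from the master inequality~\eqref{eq:main} proved in Theorem~\ref{thm:master}, namely
\[
d(b_n,\bs)\,\P_n\big[k(\bcdot)+k_{b_n}^{\bs}(\bcdot)\big]\le 2|\bs_n|\,,
\]
where $k(x)=k_{\bs}^{b_n}(x)$ and $\bs_n$ is the barycenter of $(\log_{\bs})_\#\P_n$ in the Hilbert space $\lin_{\bs}\Sp$. Since $\P_n k_{b_n}^{\bs}(\bcdot)\ge 0$ by Remark~\ref{rem:pkpos} and $k(x)\ge k_{\bs}(x):=\min_b k^b_{\bs}(x)$ pointwise, I would lower-bound the left-hand side by $d(b_n,\bs)\,\P_n k_{\bs}(\bcdot)$, giving $d(b_n,\bs)\,\P_n k_{\bs}(\bcdot)\le 2|\bs_n|$. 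The difficulty compared with Theorem~\ref{thm:master} is that $\P_n k_{\bs}(\bcdot)$ is now a random quantity that is only positive \emph{on average} ($\P k_{\bs}(\bcdot)>0$), and individual terms may be as negative as $k_{\min}<0$; so the crux is to show that with high probability $\P_n k_{\bs}(\bcdot)$ stays bounded below by a positive constant, say $\tfrac12 \P k_{\bs}(\bcdot)$.

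The main step is therefore a concentration argument for $\P_n k_{\bs}(\bcdot)$ around its mean $\P k_{\bs}(\bcdot)$. First I would record that $k_{\bs}(x)$ is bounded below by $k_{\min}$ and above by $1$ (the latter from~\eqref{eq:comparemetrics} in the lower-curvature setting), but it is not bounded above uniformly when $\kappa<0$; rather, inspecting~\eqref{eq:kb} and the angle comparison~\eqref{eq:ineqangles} shows $1-k^b_{\bs}(x)$ is controlled by a term of order $d^2(\bs,x)$, so $|k_{\bs}(x)|\lesssim 1+ d^2(\bs,x)$. Under the subgaussian assumption~\eqref{eq:subg}, $d^2(\bs,x)$ is subexponential, hence $k_{\bs}(x)$ is a subexponential random variable with controlled Orlicz norm. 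A Bernstein-type inequality for sums of i.i.d.\ subexponential variables then yields that
\[
\P_n k_{\bs}(\bcdot)\ge \tfrac12\,\P k_{\bs}(\bcdot)
\]
with probability at least $1-e^{-c_2 n}$, where $c_2>0$ depends on $\varsigma^2,\kappa$ and $\P k_{\bs}(\bcdot)$ but not on $n$. This is exactly the event whose complementary probability appears as $e^{-c_2 n}$ in the statement.

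On this event the master inequality gives $d(b_n,\bs)\le \tfrac{4}{\P k_{\bs}(\bcdot)}\,|\bs_n|$, so it remains to produce a tail bound for $|\bs_n|^2$. Since $\bs_n$ is the empirical mean of the i.i.d.\ Pettis-integrable Hilbert-space valued variables $\log_{\bs}(X_i)$, whose common mean is the tip $o_{\bs}$ (by Theorem~\ref{thm:toolbox}\ref{it:baryisexpbary} and the exponential-barycenter property used in~\eqref{eq:main22}) and whose norms $\|\log_{\bs}(X_i)\|_{\bs}=d(\bs,X_i)$ are subgaussian, I would invoke a standard Hilbert-space concentration inequality (e.g.\ the vector Bernstein/Hoeffding inequality, as in~\cite{Ver18}) to obtain $|\bs_n|^2\le \tfrac{c}{n}\log(2/\delta)$ with probability at least $1-\delta$. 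Combining the two high-probability events by a union bound and setting $c_1$ proportional to $c/(\P k_{\bs}(\bcdot))^2$ yields the claimed bound with probability at least $1-\delta-e^{-c_2 n}$.

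Uniqueness of $\bs$ follows as in Theorem~\ref{thm:master}: the hypothesis $\P k_{\bs}(\bcdot)>0$ forces the strict inequality $\P d^2(b,\bcdot)>\P d^2(\bs,\bcdot)$ for every $b\neq \bs$ via the variance equality~\eqref{eq:vi}, since the left-hand side equals $d^2(b,\bs)\,\P k^b_{\bs}(\bcdot)\ge d^2(b,\bs)\,\P k_{\bs}(\bcdot)>0$. I expect the main obstacle to be the subexponential tail control of $k_{\bs}(x)$: one must pass the bound $1-k^b_{\bs}(x)\lesssim d^2(\bs,x)$ through the minimum over $b$ uniformly, and verify that taking $\inf_b$ does not destroy measurability or the Orlicz-norm estimate — this is where the curvature lower bound $\kappa$ and the precise form of~\eqref{eq:ineqangles} enter quantitatively.
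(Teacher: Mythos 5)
Your proposal is correct and is essentially the paper's own proof: both start from inequality~\eqref{eq:main}, split on the event $\{\P_n k_{\bs}(\bcdot)\ge c\,\P k_{\bs}(\bcdot)\}$, bound its complement by a Bernstein-type inequality (giving the $e^{-c_2 n}$ term), bound $|\bs_n|$ by subgaussian concentration in the Hilbert space $\lin_{\bs}\Sp$ (giving the $\delta$ term), and finish with a union bound. The only differences are minor: the paper uses a one-sided Bernstein inequality~\cite[eq.~2.10]{BouLugMas13}, which needs only the hypothesis $k_{\bs}\ge k_{\min}$ together with finiteness of $\P k_{\bs}^2(\bcdot)$ rather than your full subexponential Orlicz-norm analysis of the upper tail of $k_{\bs}$, while your explicit uniqueness argument via the variance equality~\eqref{eq:vi} supplies a detail the paper's proof leaves implicit.
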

\begin{proof} Denote $k(x)=k_{\bs}(x)$ for brevity.
For all $c\in(0,1)$ and $t>0$,
\begin{align}
    \prob(d(b_n,\bs)>t) &\le \prob(d(b_n,\bs)>t,P_nk\ge cPk)+\prob(P_nk<cPk)
    \nonumber\\
    &\le \prob(d(b_n,\bs)P_nk>tcPk)+\prob((P-P_n)k>(1-c)Pk)
    \nonumber\\
    &\le \prob(2|\bs_n|>tcPk)+\prob((P-P_n)k>(1-c)Pk)
    \nonumber,
\end{align}
where the last inequality follows from inequality~\eqref{eq:main}. Using a one-sided variant of Bernstein's inequality~\cite[eq. 2.10]{BouLugMas13}, we get 
\[\prob((P-P_n)k>(1-c)Pk)\le \exp\left(-\frac{n}{2}\frac{(1-c)Pk}{|k_{\rm min}|}\left(\frac{(1-c)|k_{\rm min}|Pk}{Pk^2}\land \frac{3}{2}\right)\right).\]
Using \eqref{eq:subg}, and the classical properties of subgaussian variables, we get
\begin{align}
    \prob(2|\bs_n|>tcPk)&\le 2\exp\left(-\frac{nt^2c^2(Pk)^2}{8\varsigma^2}\right).
\end{align}
The result then follows by considering, for any $c\in(0,1)$,
 \[c_1=\frac{c^2(Pk)^2}{8\varsigma^2}\quad\mbox{and}\quad c_2=\frac{(1-c)Pk}{2|k_{\rm min}|}\left(\frac{(1-c)|k_{\rm min}|Pk}{Pk^2}\land \frac{3}{2}\right).\]
\end{proof}

{To better appreciate the extent of this relaxation, recall the illustrative example of the sphere used in Section~\ref{sec:overview}. Theorem~\ref{thm:master2} covers distributions whose support may be almost the entire sphere. However, note that the existence of a finite $k_{\min}$ precludes a support that includes the cut-locus of the barycenter, even if $P$ puts arbitrarily small probability on small neighborhoods of the cut-locus. While such cases are out of the scope of the present paper, we anticipate that careful truncation arguments should suffice to derive parametric rates of convergence in such favorable cases.}

\section{Positively curved spaces}
Theorem~\ref{thm:master} guarantees a dimension-free parametric rate of convergence of $b_n$ under a uniform positive lower bound on the function $(x,b)\mapsto k^b_{\bs}(x)$. Such a condition is closely linked to the better know notion of $k$-convexity which is connected to positive curvature upper bounds~\cite{Oht07}. In fact, it is not hard to check that  assuming that $k^b_{\bs}(x)\ge k_{\min}$ uniformly not only in $b,x$ but also in $\bs \in \supp(P)$ is equivalent to  $k_{\min}$-convexity of the support of $P$. As a result, the condition of  Theorem~\ref{thm:master} is weaker than $k_{\min}$-convexity of the support of $P$ since it imposes only a control on geodesics emanating from $\bs$. The rest of this section is devoted to developing a suitable relaxation via the notion of geodesic extendibility.

\subsection{Extendible geodesics}

We present a compelling synthetic geometric condition that implies this lower bound in the context of positively curved spaces: the extendibility, by a given factor, of all geodesics (emanating from and arriving at) the barycenter $\bs$.
To formalize the notion of extendible geodesics, consider a (constant-speed) geodesic $\gamma \colon [0,1]\to \Sp$.
For $(\lambda_{\rm in},\lambda_{\rm out})\in \R^2_+$, we say that $\gamma$ is $(\lambda_{\rm in},\lambda_{\rm out})$-extendible if there exists a (constant-speed) geodesic $\gamma^{+} \colon [-\lambda_{\rm in},1+\lambda_{\rm out}]\to \Sp$ such that $\gamma$ is the restriction of $\gamma^{+}$ to $[0,1]$.
Before we state our sufficient condition we shall first need a fact
given in
\cite{AhiGouPar18}. 
\begin{thm}
\label{thm:extendvariance}
Suppose that $\curv(\Sp) \ge 0$. Let $\P\in\PS$ with barycenter $\bs$.
Suppose that, for each $x \in \supp(\P)$, there exists a geodesic $\gamma_{x}:[0,1]\to\Sp$ connecting $\bs$ to $x$ which is $(0,\lambda)$-extendible. Suppose in addition that $\bs$ remains a barycenter of distribution $\P_{\lambda}=(e_{\lambda})_{\#} P$ where $e_{\lambda}(x)=\gamma^{+}_{x}(1+\lambda)$. Then for all $b \in \Sp$,
\begin{equation}
\label{eq:extend}    
\frac{\lambda}{1 + \lambda} d^2(b, \bs) \le  \int (d^2(x, b) - d^2(x,\bs)) \ud \P(x).
\end{equation}
\end{thm}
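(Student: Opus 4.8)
The plan is to turn the extendibility hypothesis into a pointwise comparison inequality at each point of the support and then integrate, using the barycenter invariance of $P_\lambda$ to close the argument. Fix $x\in\supp(\P)$ and write $x_\lambda:=e_\lambda(x)=\gamma_x^+(1+\lambda)$. Since $\gamma_x^+$ is a constant-speed geodesic on $[0,1+\lambda]$ with $\gamma_x^+(0)=\bs$ and $\gamma_x^+(1)=x$, the point $x$ lies on the geodesic joining $\bs$ to $x_\lambda$ at parameter $t:=\tfrac{1}{1+\lambda}$, and constant speed forces
\[
d(\bs,x_\lambda)=(1+\lambda)\,d(\bs,x).
\]
The first real step is to invoke the curvature lower bound. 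Because $\curv(\Sp)\ge0$, the perimeter condition in Definition~\ref{def:curvk} with $\kappa=0$ is vacuous ($D_0=+\infty$), so for any $b$ I may form the Euclidean comparison triangle $\{\bar b,\bar\bs,\overline{x_\lambda}\}$ of $\{b,\bs,x_\lambda\}$ and apply the comparison inequality to the geodesic from $\bs$ to $x_\lambda$ evaluated at the intermediate point $x$. This yields $d(b,x)\ge d_0(\bar b,\bar x)$, where $\bar x=(1-t)\bar\bs+t\,\overline{x_\lambda}$.

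Next I would compute the Euclidean right-hand side with the elementary identity for the distance from a point to a convex combination of two others, namely
\[
d_0(\bar b,\bar x)^2=(1-t)\,d^2(b,\bs)+t\,d^2(b,x_\lambda)-t(1-t)\,d^2(\bs,x_\lambda).
\]
Substituting $t=\tfrac{1}{1+\lambda}$ and $d^2(\bs,x_\lambda)=(1+\lambda)^2d^2(\bs,x)$, the last term collapses to $\lambda\,d^2(\bs,x)$, giving the pointwise bound
\[
d^2(x,b)\ge \frac{\lambda}{1+\lambda}\,d^2(b,\bs)+\frac{1}{1+\lambda}\,d^2(b,x_\lambda)-\lambda\,d^2(\bs,x).
\]
Integrating against $\P$ and using that $\bs$ is a barycenter of $\P_\lambda=(e_\lambda)_\#\P$, I get $\int d^2(b,x_\lambda)\,\ud\P(x)=\int d^2(b,y)\,\ud\P_\lambda(y)\ge \int d^2(\bs,y)\,\ud\P_\lambda(y)=(1+\lambda)^2\int d^2(\bs,x)\,\ud\P(x)$. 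Plugging this in, the variance terms combine with total coefficient $(1+\lambda)-\lambda=1$, leaving exactly
\[
\frac{\lambda}{1+\lambda}\,d^2(b,\bs)\le \int\bigl(d^2(x,b)-d^2(x,\bs)\bigr)\,\ud\P(x),
\]
which is the claim.

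I expect the main obstacle to be making the comparison step fully rigorous rather than the (routine) Euclidean computation: one must verify that $x$ is genuinely the comparison image of the correct interior point of the geodesic from $\bs$ to $x_\lambda$, which is precisely where constant-speed bi-directional extendibility is used, and that $x\mapsto d^2(b,x_\lambda)$ is measurable and integrable so the integration against $\P$ and the pushforward identity for $\P_\lambda$ are legitimate. The barycenter invariance of $\P_\lambda$ is assumed as a hypothesis, so beyond this bookkeeping of the comparison configuration there is nothing further to establish.
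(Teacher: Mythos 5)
Your proof is correct: the pointwise comparison inequality $d^2(b,x)\ge \tfrac{\lambda}{1+\lambda}d^2(b,\bs)+\tfrac{1}{1+\lambda}d^2(b,e_\lambda(x))-\lambda d^2(\bs,x)$ is a valid instance of the $\curv(\Sp)\ge 0$ comparison applied to the geodesic from $\bs$ to $e_\lambda(x)$ through $x$, and integrating it against $\P$ together with the assumed barycenter property of $\P_\lambda$ closes the argument exactly as you compute. Note that the paper itself gives no proof of this statement---it imports it as a fact from the cited reference \cite{AhiGouPar18}---and your argument is essentially the one given there, so there is no divergence of approach to report.
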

According to the variance equality of Theorem~\ref{thm:varequal}, inequality \eqref{eq:extend} is equivalent to the statement
that, for all $b \in \Sp$,
\[
\int k_{\bs}^b(x)\ud\P(x) \ge \frac{\lambda}{1 + \lambda}.
\]
We will use this observation next.

\begin{thm}
\label{thm:extendgeod} Suppose that $\curv(\Sp)\ge 0$ and let $x, b, \bs \in \Sp$. Suppose that, for some $\lambda_{\rm in},\lambda_{\rm out}>0$, there is a geodesic connecting $\bs$ to $x$ which is $(\lambda_{\rm in},\lambda_{\rm out})$-extendible. Then 
\[
k_{\bs}^b(x)\ge \frac{\lambda_{\rm out}}{1+\lambda_{\rm out}}-\frac{1}{\lambda_{\rm in}}.
\]
\end{thm}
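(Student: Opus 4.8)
The goal is to lower-bound the hugging function $k_{\bs}^b(x)$ for a single triple $x,b,\bs$, given a geodesic from $\bs$ to $x$ that extends by $\lambda_{\rm in}$ on the near side and $\lambda_{\rm out}$ on the far side. The plan is to decouple the two extensions and use each one to control a different piece of $k_{\bs}^b(x)$. Recall from \eqref{eq:kb} that
\[
k_{\bs}^b(x)=1-\frac{\|\log_{\bs}(x)-\log_{\bs}(b)\|_{\bs}^2-d^2(x,b)}{d^2(b,\bs)}\,.
\]
Since $\curv(\Sp)\ge 0$, inequality \eqref{eq:comparemetrics} already gives $k_{\bs}^b(x)\le 1$; the content is the reverse bound. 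My strategy is to apply Theorem~\ref{thm:extendvariance} not to a probability measure but, in effect, to the single point mass at $x$, so that the outward extension $\lambda_{\rm out}$ yields a bound coming purely from the far-side geodesic, and then to pay a correction term $-1/\lambda_{\rm in}$ for the inward extension.

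First I would isolate the role of $\lambda_{\rm out}$. Consider the Dirac mass $\delta_x$; of course $x$ itself is its barycenter, but the relevant geometric quantity is the comparison between $d^2(\cdot,b)$ evaluated at $x$ versus at the forward endpoint $\gamma_x^+(1+\lambda_{\rm out})$. The cleanest route is to write $k_{\bs}^b(x)$ in terms of $\langle \log_{\bs}(x),\log_{\bs}(b)\rangle_{\bs}$ using \eqref{eq:polarize}, exactly as in the proof of Theorem~\ref{thm:varequal}, obtaining
\[
k_{\bs}^b(x)\,d^2(b,\bs)=d^2(x,b)-d^2(x,\bs)+2\langle \log_{\bs}(x),\log_{\bs}(b)\rangle_{\bs}\,.
\]
The forward extendibility by $\lambda_{\rm out}$ lets me run the single-point version of the argument behind Theorem~\ref{thm:extendvariance}: along the extended geodesic $\gamma_x^+$, convexity of $t\mapsto d^2(\gamma_x^+(t),b)$ on the comparison model $M^2_0$ (a consequence of $\curv(\Sp)\ge 0$ giving the reverse convexity inequality, applied to the comparison triangle $\{b,\bs,\gamma_x^+(1+\lambda_{\rm out})\}$) produces the bound $\tfrac{\lambda_{\rm out}}{1+\lambda_{\rm out}}\,d^2(b,\bs)\le d^2(x,b)-d^2(x,\bs)+2\langle\log_{\bs}(x),\log_{\bs}(b)\rangle_{\bs}$, i.e. the far-side contribution to $k_{\bs}^b(x)$ is at least $\tfrac{\lambda_{\rm out}}{1+\lambda_{\rm out}}$.

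Next I would account for $\lambda_{\rm in}$. The inward extension controls how far the angle $\sphericalangle_{\bs}(\uparrow_{\bs}^x,\uparrow_{\bs}^b)$ in the tangent cone can deviate from the genuine metric angle, equivalently how large the defect $\|\log_{\bs}(x)-\log_{\bs}(b)\|_{\bs}^2-d^2(x,b)$ can be relative to $d^2(b,\bs)$. Extending the geodesic backward past $\bs$ by $\lambda_{\rm in}$ means that near $\bs$ the geodesic $\gamma_x$ is the restriction of a two-sided geodesic, so I can invoke the angle monotonicity of Theorem~\ref{thm:curvequiv}-\ref{it:anglemonotonicity} together with the quadruple inequality \ref{it:quadruple} applied to the four points $\bs$, $x$, $b$, and the backward endpoint $\gamma_x^+(-\lambda_{\rm in})$. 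Writing out the three comparison angles at $\bs$ for this configuration and using that the direction to $\gamma_x^+(-\lambda_{\rm in})$ is exactly opposite to $\uparrow_{\bs}^x$, I expect a trigonometric estimate showing the deviation term is bounded by $\tfrac{1}{\lambda_{\rm in}}\,d^2(b,\bs)$, which is the $-1/\lambda_{\rm in}$ correction in the statement. Combining the two contributions gives $k_{\bs}^b(x)\ge \tfrac{\lambda_{\rm out}}{1+\lambda_{\rm out}}-\tfrac{1}{\lambda_{\rm in}}$.

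The main obstacle, and the step I would budget the most care for, is the inward part: making rigorous the claim that the backward extension by $\lambda_{\rm in}$ quantitatively controls the angle defect between the cone metric and the ambient metric. One must be careful that the comparison triangle involving $\gamma_x^+(-\lambda_{\rm in})$ has perimeter below $2D_0=+\infty$ (automatic here since $\kappa=0$) and that the law of cosines manipulations in $M^2_0$ are applied on the correct side of the curvature inequality. The forward part, by contrast, is essentially a specialization of the already-available Theorem~\ref{thm:extendvariance} and should go through with little friction; the delicate bookkeeping is entirely in converting the geometric picture of a short backward extension into the explicit $1/\lambda_{\rm in}$ loss.
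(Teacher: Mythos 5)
Your proposed decomposition does not work: the two extensions cannot be decoupled the way you want, and the first half of your plan is false as stated. Theorem~\ref{thm:extendvariance} is a statement about measures whose \emph{barycenter} is $\bs$; its conclusion (equivalently, via Theorem~\ref{thm:varequal}, the bound $\int k_{\bs}^b\,\ud P\ge \lambda/(1+\lambda)$) hinges on the first-order optimality of $\bs$, which makes the integrated inner-product term $\int\langle\log_{\bs}(\cdot),\log_{\bs}(b)\rangle_{\bs}\,\ud P$ vanish (Theorem~\ref{thm:toolbox}-\ref{it:baryisexpbary}). There is no ``single-point version'': for $\delta_x$ with $x\ne\bs$ the barycenter is $x$, not $\bs$, and the term $\langle\log_{\bs}(x),\log_{\bs}(b)\rangle_{\bs}$ is uncontrolled. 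Concretely, your claimed far-side inequality
\[
\frac{\lambda_{\rm out}}{1+\lambda_{\rm out}}\,d^2(b,\bs)\le d^2(x,b)-d^2(x,\bs)+2\langle\log_{\bs}(x),\log_{\bs}(b)\rangle_{\bs},
\]
i.e.\ $k_{\bs}^b(x)\ge \lambda_{\rm out}/(1+\lambda_{\rm out})$ from outward extendibility alone, fails on the unit sphere: take $\bs$ the north pole and $x,b$ on the equator a quarter turn apart, so that $d(\bs,x)=d(\bs,b)=d(x,b)=\pi/2$ and $\sphericalangle_{\bs}(\uparrow^x_{\bs},\uparrow^b_{\bs})=\pi/2$. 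Then $\|\log_{\bs}(x)-\log_{\bs}(b)\|^2_{\bs}=\pi^2/2$, so $k_{\bs}^b(x)=0$, while the quarter meridian from $\bs$ to $x$ extends forward to the south pole and is thus $(0,1)$-extendible; your bound would give $k_{\bs}^b(x)\ge 1/2$. (This is consistent with the theorem because here $\lambda_{\rm in}=0$.) The inward half of your plan fares no better: a similar spherical computation with $b$ near the antipode of $\bs$ shows that inward extendibility alone does not bound the defect $\|\log_{\bs}(x)-\log_{\bs}(b)\|^2_{\bs}-d^2(x,b)$ by $d^2(b,\bs)/\lambda_{\rm in}$, and in any case you only sketch this step (``I expect a trigonometric estimate''), so the $-1/\lambda_{\rm in}$ term is never actually derived.

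The paper's proof uses the two extensions \emph{jointly}, and the role of $\lambda_{\rm in}$ is quite different from an angle-defect correction: it is used to build a counterweight that turns the single point $x$ into a measure centered at $\bs$. One sets $z=\gamma^+(-\xi)$ with $\xi=\lambda_{\rm in}/(1+\lambda_{\rm out})$ (note that $\xi$ mixes both constants) and forms $P=\frac{\xi}{1+\xi}\delta_x+\frac{1}{1+\xi}\delta_z$, whose barycenter is $\bs$. Both geodesics $\bs\to x$ and $\bs\to z$ are then outward-extendible by the factor $\lambda_{\rm out}$ --- the backward one precisely because $(1+\lambda_{\rm out})\xi=\lambda_{\rm in}$, so its extension exactly uses up the inward allowance --- and $\bs$ remains a barycenter of the pushed-forward measure, so Theorem~\ref{thm:extendvariance} applies and yields
\[
\frac{\xi}{1+\xi}\,k_{\bs}^b(x)+\frac{1}{1+\xi}\,k_{\bs}^b(z)\ \ge\ \frac{\lambda_{\rm out}}{1+\lambda_{\rm out}}.
\]
The conclusion then follows from the trivial estimate $k_{\bs}^b(z)\le 1$, valid by \eqref{eq:comparemetrics} since $\curv(\Sp)\ge 0$: solving for $k_{\bs}^b(x)$ produces exactly $\frac{\lambda_{\rm out}}{1+\lambda_{\rm out}}-\frac{1}{\lambda_{\rm in}}$. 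In other words, the $-1/\lambda_{\rm in}$ term is the price of the mass placed at the auxiliary point $z$, not a bound on the discrepancy between the cone metric and the ambient metric; recovering this mechanism is the missing idea in your proposal.
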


\begin{proof} 
Let $\gamma:[0,1]\to \Sp$ be a $(\lambda_{\rm in},\lambda_{\rm out})$-extendible geodesic connecting $\bs$ to $x$ and Let $\gamma^+:[-\lambda_{\rm in},1+\lambda_{\rm out}]\to S$ be its extension. Let $z=\gamma^+(-\xi)$ where $\xi=\lambda_{\rm in}/(1+\lambda_{\rm out})$.
Then, it may be easily checked that $\bs$ is a barycenter of the probability measure 
\[
\P:=\frac{\xi}{1+\xi}\delta_x+\frac{1}{1+\xi}\delta_{z}.
\] Now, we wish to apply Theorem~\ref{thm:extendvariance} to $\P$. To this aim, note that the geodesic $\gamma$ from $\bs$ to $x$ is $(0,1+\lambda_{\rm out})$-extendible by assumption with $e_{\lambda_{\rm out}}(x)=\gamma^+(1+\lambda_{\rm out})$. Similarly, we check that the geodesic $\sigma:[0,1]\to S$ connecting $\bs$ to $z$ and defined by $\sigma(t)=\gamma^+(-t\xi)$ is $(0,1+\lambda_{\rm out})$-extendible by construction with $e_{\lambda_{\rm out}}(z)=\gamma^+(-\lambda_{\rm in})$. Finally, one checks that $\bs$ remains a barycenter of the probability measure $P_{\lambda_{\rm out}}=(e_{\lambda_{\rm out}})_{\#}P$. As a result, Theorem~\ref{thm:extendvariance} implies that
\[    
    \frac{\lambda_{\rm out}}{1 + \lambda_{\rm out}}\le Pk_{\bs}^b(\bcdot)= \frac{\xi}{1 + \xi} k_{\bs}^b(x) + \frac{1}{1 + \xi}k_{\bs}^b(z).
    \]
Finally, the fact that $\curv(S)\ge 0$ implies that $d(x,y)\le \|\log_{\bs}(x)-\log_{\bs}(y)\|_{\bs}$, for all $x,y\in S$ which imposes that $k^b_{\bs}(z)\le 1$ for all $b,z\in S$. Hence, we obtain
\begin{align*}
    k_{\bs}^b(x) &\ge \frac{1 + \xi}{\xi} \left( \frac{\lambda_{\rm out}}{1 + \lambda_{\rm out}} - \frac{1}{1 + \xi}\right)= \frac{\lambda_{\rm out}}{1 + \lambda_{\rm out}} -
    \frac{1}{\lambda_{\rm in}},
\end{align*}  
which completes the proof.
\end{proof}

\subsection{The Wasserstein space}\label{subsec:wasserstein}

The case of geodesic spaces with lower curvature bound contains several spaces of interest such as the space of metric measure spaces equipped with the Gromov-Wasserstein distance~\cite{Stu12} or the Wasserstein space over a postively curved space.
In this subsection, we explore the extendible geodesics condition, described in the previous subsection, in the context of the Wasserstein space over a {separable} Hilbert space. 
This space exhibits a very particular structure that allows for a simple and transparent formulation of the extendibility of geodesics in terms of the regularity of Kantorovich potentials.
Given a {separable} Hilbert space $H$, with inner product $\langle\cdot ,\cdot\rangle$ and norm $|\cdot|$, recall that the subdifferential $\partial \varphi\subset H^2$ of a function $\varphi:H\to \R$ is defined by $\partial \varphi=\{(x,g):\forall y\in H, \varphi(y)\ge\phi(x)+\langle g,y-x\rangle\}$.
We denote $\partial \varphi(x)=\{g\in H:(x,g)\in \partial\varphi\}$.
The function $\varphi$ is called $\alpha$-strongly convex if, for all $x\in H$, $\partial \varphi(x)\ne\emptyset$ and 
\[
\langle g,x-y\rangle\ge\phi(x)-\varphi(y)+\frac{\alpha}{2}|x-y|^2,
\]
for all $g\in\partial\varphi(x)$ and all $y\in H$.
We recall the following result.
\begin{thm}[Theorem 3.5 in \cite{AhiGouPar18}]
\label{thm:halfextwass}
Let $(\Sp,d)=(\mathcal P_2(H),W_2)$ be the Wasserstein space over a separable Hilbert space $H$.
Let $\mu$ and $\nu$ be two elements of $\Sp$ and let $\gamma:[0,1]\to S$ be a geodesic connecting $\mu$ to $\nu$.
Then, $\gamma$ is $(0,\lambda)$-extendible if, and only if, the support of the optimal transport plan of $\mu$ to $\nu$ lies in the subdifferential $\partial\phi_{\mu\ra\nu}$ of a $\frac{\lambda}{1+\lambda}$-strongly convex map $\phi_{\mu\ra\nu}:H\to \R$.
\end{thm}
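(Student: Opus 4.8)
The plan is to exploit the explicit optimal-transport description of geodesics in $(\mathcal P_2(H),W_2)$ and to reduce $(0,1+\lambda)$-extendibility of $\gamma$ to a convexity property of a one-parameter family of potentials. Every geodesic from $\mu$ to $\nu$ is a displacement interpolation $\gamma(t)=(T_t)_{\#}\Pi$, where $\Pi$ is an optimal coupling of $\mu$ and $\nu$ and $T_t(x,y)=(1-t)x+ty$. By the Knott--Smith/R\"uschendorf duality, optimality of $\Pi$ is equivalent to $\supp(\Pi)$ being cyclically monotone, and by Rockafellar's theorem this holds if and only if $\supp(\Pi)\subset\partial\phi$ for some proper, convex, lower semicontinuous $\phi\colon H\to\R$. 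The extension $\gamma^{+}$ of $\gamma$ is again a displacement interpolation; for the construction it will be the continuation of the \emph{same} plan, $\gamma^{+}(t)=(T_t)_{\#}\Pi$, and the statement amounts to identifying the largest parameter up to which this curve remains a geodesic emanating from $\mu$.

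First I would isolate the central reduction. Writing $\Pi_s=(\pi_1,T_s)_{\#}\Pi$ for the coupling of $\mu$ with $\gamma^{+}(s)$, one checks that $(T_t)_{\#}\Pi$ for $t\in[0,s]$ coincides, after the affine reparametrization $t=su$, with the displacement interpolation $(T_u)_{\#}\Pi_s$, $u\in[0,1]$. By McCann's characterization this reparametrized curve is a constant-speed geodesic precisely when $\Pi_s$ is optimal. For $(x,y)\in\supp(\Pi)$ one has $T_s(x,y)=(1-s)x+sy$ with $y\in\partial\phi(x)$, so that $\supp(\Pi_s)\subset\partial\Phi_s$ for
\[
\Phi_s(x)=\tfrac{1-s}{2}\,|x|^2+s\,\phi(x).
\]
Hence $\Pi_s$ is optimal as soon as $\Phi_s$ is convex (it is then supported in the subdifferential of a convex function), and conversely optimality of $\Pi_s$ forces $\supp(\Pi_s)$ to lie in a cyclically monotone set. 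The problem is thereby reduced to the convexity of $\Phi_s$.

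The remaining step is the convexity bookkeeping, carried out in both directions. Recording $c$-strong convexity of $\phi$ as convexity of $\phi-\tfrac{c}{2}|\cdot|^2$, one writes $\Phi_s(x)=\tfrac12\bigl(1-s(1-c)\bigr)|x|^2+s\bigl(\phi(x)-\tfrac{c}{2}|x|^2\bigr)$, so that $\Phi_s$ is convex exactly when $1-s(1-c)\ge 0$, i.e.\ the geodesic runs up to $s_{\max}=\tfrac{1}{1-c}$. For the modulus $c=\tfrac{\lambda}{1+\lambda}$ prescribed in the statement this gives $s_{\max}=1+\lambda$, which is precisely the forward extension asserted; exhibiting $\gamma^{+}(t)=(T_t)_{\#}\Pi$ on $[0,1+\lambda]$ then proves the implication from strong convexity to extendibility. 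For the converse I would start from an extension reaching parameter $1+\lambda$, represent it by the optimal endpoint coupling $\Pi^{+}\subset\partial\Psi$ with $\Psi$ convex, and recover the coupling of $\mu$ to $\nu$ as the interpolation of $\Pi^{+}$ at the intermediate time $1/(1+\lambda)$; the induced potential is the convex combination $\phi=\tfrac{\lambda}{2(1+\lambda)}|\cdot|^2+\tfrac{1}{1+\lambda}\Psi$, whose subdifferential supports this coupling and which is $\tfrac{\lambda}{1+\lambda}$-strongly convex by construction.

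The main obstacle is that $H$ is an infinite-dimensional separable Hilbert space, where Brenier's theorem need not provide an honest transport \emph{map}: no absolute continuity is assumed on $\mu$ and there is no reference measure to appeal to. The argument must therefore be conducted entirely at the level of couplings, cyclical monotonicity, and Rockafellar's theorem, treating $\partial\Phi_s$ and $\partial\phi$ as genuine (possibly multivalued) subdifferentials rather than gradients. A secondary point is the measurability and second-moment control needed to push $\Pi$ forward by $T_s$ and to certify $W_2(\mu,\gamma^{+}(s))=s\,W_2(\mu,\nu)$ along the extension; both are automatic once $\Phi_s$ is convex, since the endpoint coupling is then optimal and inherits finite second moments. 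With these measure-theoretic points dispatched, the convexity computation above yields the stated equivalence together with the precise modulus $\tfrac{\lambda}{1+\lambda}$.
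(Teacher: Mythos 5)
The paper itself contains no proof of this statement: it is imported verbatim as Theorem~3.5 of \cite{AhiGouPar18}, so there is no internal argument to compare yours against, and your proposal can only be judged on its own terms. On those terms it is correct, and it is in substance the standard argument (and that of the cited source): every constant-speed geodesic of $(\mathcal P_2(H),W_2)$ is a displacement interpolation $(T_t)_{\#}\Pi$ of an optimal plan (Theorem~7.2.2 in \cite{AmbGigSav05}, valid over separable Hilbert spaces), optimality of a finite-cost plan is equivalent to cyclical monotonicity of its support, Rockafellar's theorem converts the latter into inclusion in the subdifferential of a proper convex l.s.c.\ function, and the extension threshold is read off from convexity of $\Phi_s=\tfrac{1-s}{2}|\cdot|^2+s\phi$, namely $s_{\max}=\tfrac{1}{1-c}=1+\lambda$ for $c=\tfrac{\lambda}{1+\lambda}$; your converse, with $\phi=\tfrac{\lambda}{2(1+\lambda)}|\cdot|^2+\tfrac{1}{1+\lambda}\Psi$, is the correct inverse bookkeeping, and your insistence on working with plans and multivalued subdifferentials (rather than Brenier maps) is exactly what the infinite-dimensional, non-absolutely-continuous setting requires.

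Two fine points are worth recording. First, you silently (and correctly) read ``$(0,1+\lambda)$-extendible'' as ``extendible to a constant-speed geodesic on $[0,1+\lambda]$''; under the paper's own definition of $(\lambda_{\rm in},\lambda_{\rm out})$-extendibility the phrase would literally mean an extension to $[0,2+\lambda]$, and it is your reading, not the literal one, that is consistent with the modulus $\tfrac{\lambda}{1+\lambda}$ and with the constant $1-\beta+\alpha$ in Corollary~\ref{thm:wasserstein}. Second, in the converse direction you produce \emph{an} optimal plan from $\mu$ to $\nu$ (the time-$\tfrac{1}{1+\lambda}$ interpolation of $\Pi^{+}$) supported in $\partial\phi$; to match the statement's ``\emph{the} optimal transport plan'' (equivalently, the plan generating the given $\gamma$), add the standard fact that an interior point of a geodesic is joined to the endpoints by a \emph{unique} optimal plan (Lemma~7.2.1 in \cite{AmbGigSav05}): since $\nu=\gamma^{+}(1)$ is interior to $\gamma^{+}$ on $[0,1+\lambda]$, your plan necessarily coincides with the one inducing $\gamma$. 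This is a one-line patch, not a gap.
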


In the above theorem, $\phi_{\mu\ra\nu}$ is defined as follows. Let $f_{\mu\to\nu},g_{\nu\to\mu}$ be optimal Kantorovich potentials, i.e., solutions of the dual Kantorovich problem
\[W^2_2(\mu,\nu)=\sup\left\{\int f\ud \mu+\int g\ud\nu: f,g\in C_b(H), f(x)+g(y)\le |x-y|^2\right\}\,,\]
where $C_b(H)$ denotes the set of real-valued bounded continuous functions on $H$. 
Then, for all $x\in H$,
\[f_{\mu\to\nu}(x)=2|x|^2-\phi_{\mu\ra\nu}(x),\qquad g_{\nu\to\mu}(x)=2|x|^2-\phi_{\nu\ra\mu}(x),\] 
and one checks that $\phi_{\nu\ra\mu}=\phi^{*}_{\mu\ra\nu}$ is the Fenchel-Legendre conjuguate of $\phi_{\mu\ra\nu}$ ~\cite[Theorem 5.10]{Vil09}. Our next result is in the same spirit as Theorem~\ref{thm:halfextwass} and characterises the $(\lambda_{\rm in}, 1+\lambda_{\rm out})$-extendibility of geodesics in the Wasserstein space in a specific scenario. Recall that a convex function $\varphi:H\to \R$ is called $\beta$-smooth if

\[
\langle g_x,x-y\rangle\le\phi(x)-\varphi(y)+\frac{\beta}{2}|x-y|^2,\qquad \forall\, g_x \in \partial \varphi(x)
\]
for all $x,y\in H$. In fact, for $\beta>0$, it is easy to check that convex $\beta$-smooth functions are differentiable. Therefore, a convex function $\varphi:H\to \R$ is  $\beta$-smooth if and only if it is differentiable and 
\[
\langle \nabla \varphi(x),x-y\rangle\le\phi(x)-\varphi(y)+\frac{\beta}{2}|x-y|^2\,.
\]

It is known that a convex function is $\beta$-smooth if and only if its Fenchel-Legendre transform is $1/\beta$-strongly convex \cite[Theorem 18.15]{BauCom17}. 
Since $\phi_{\nu\ra\mu}=\phi^{*}_{\mu\ra\nu}$, the next result follows readily from Theorems~\ref{thm:master},~\ref{thm:extendgeod} and~\ref{thm:halfextwass}.

\begin{cor}
\label{cor:wasserstein}

Let $(\Sp,d)=(\mathcal P_2(H),W_2)$ be the Wasserstein space over a separable Hilbert space $H$. Let $\P\in\PS$ with barycenter $\mus\in\Sp$. Let $\alpha,\beta>0$ and suppose that every $\mu\in\supp(P)$ is the pushforward of $\mu^{\star}$ by the gradient of an $\alpha$-strongly convex and $\beta$-smooth function $\varphi_{\mus\to\mu}$, i.e., $\mu=(\nabla\varphi_{\mus\to\mu})_{\#}\mus$. Then, if $\beta-\alpha<1$, any empirical barycenter $\mu_n$ of $P$ satisfies
\[
\esp W_2^2(\mu_n,\mus)\le  \frac{4\sigma^2}{(1-\beta+\alpha)^2n}.
\]
\end{cor}

\subsection{Examples}
We complete this section by two concrete examples in which we instantiate sufficient conditions for the application of Corollary~\ref{cor:wasserstein}.

\subsubsection{Gaussians.}

{Consider first the metric space of Gaussians distributions over $H=\R^D$ equipped with the $2$-Wasserstein distance.

Assume that $P$ is supported on non-degenerate
Gaussians $\mathcal{N}(m, \Sigma) \sim P$.

It is a well-known fact that the optimal transport map between
$\mathcal{N}(m_0, \Sigma_0)$ and $\mathcal{N}(m_1, \Sigma_1)$ is given by~\cite{PeyCut19}:
\begin{equation}\label{eqn:bw_map}
x \mapsto \Sigma_0^{-1/2}(\Sigma_0^{1/2} \Sigma_1 \Sigma_0^{1/2})^{1/2} \Sigma_0^{-1/2}(x - m_0) + m_1.
\end{equation}

Using Equation~\eqref{eqn:bw_map}, the condition of
geodesic extendibility
translates, by Theorem~\ref{thm:halfextwass},
to control on the maximum and minimum eigenvalues of
the matrix acting on $(x - m_0)$.
A natural means of establishing this control uniformly
for the distribution $P$ is to assume that each element of its support
$\mathcal{N}(m, \Sigma)$ has covariance matrix $\Sigma$ with eigenvalues
in $[\kappa_0, \kappa_1]$.
In terms of the space of positive definite matrices,
this can be interpreted as ensuring that the support of $P$ lies away from the boundary while remaining
bounded, respectively.

Under this assumption on the support of $P$, it can be shown~\cite[Prop. 15]{chewi2020gradient}
that the
barycenter of $P$ exists, and is the unique Gaussian
$\mathcal{N}(m_{\star}, \Sigma_{\star})$
where $m_{\star} := \int m \ud P(m)$
and $\Sigma_{\star}$ solves the first-order optimality equation
\[
I_D = \int \Sigma_{\star}^{-1/2}(\Sigma_{\star}^{1/2} \Sigma
\Sigma_{\star}^{1/2})^{1/2}
\Sigma_{\star}^{-1/2}
\ud P(\Sigma).
\]
Using this equation one can directly calculate that
$\Sigma_{\star}$ must also have eigenvalues in
$[\kappa_0, \kappa_1]$.
By inspecting the formula~\eqref{eqn:bw_map}, it follows that the optimal transport maps between $\mathcal{N}(m_\star,\Sigma_{\star})$ 
and each element of the support of $P$ are
$\kappa^{-1}$-strongly convex and $\kappa$-smooth, where $\kappa:= \kappa_1/\kappa_0$.
Together, this yields the following application of our main result.
\begin{cor} Let $(S,d)$ denote the space of Gaussians metrized by the $2$-Wasserstein distance. Fix $\kappa_1>\kappa_0>0$ and let $P \in \mathcal{P}_2(S, d)$ be any distribution supported solely
on Gaussians
$\mathcal{N}(m, \Sigma)$ with
eigenvalues in $[\kappa_0, \kappa_1]$. Denote $\kappa := \kappa_1/\kappa_0$,
and assume $\kappa - \kappa^{-1} < 1$. 
Then $P$ has a unique barycenter $\mu^{\star} =
\mathcal{N}(m_{\star}, \Sigma_{\star})$, and the empirical barycenter
$\mu_n = \mathcal{N}(m_n, \Sigma_n)$  satisfies
\[
\E W_2^2(\mu_n, \mu^{\star}) \leq \frac{4 \sigma^2}{(1 - \kappa + \kappa^{-1})^2n}.
\]
\end{cor}}

\subsubsection{Template deformation model.}
Throughout this paper we have made minimal assumptions on $P$, apart from those that ensure convergence of its barycenter. It is customary to define models where the mean, and more generally the barycenter of $P$ is a parameter of interest. Template deformation models were introduced as convenient models where the Wasserstein barycenter is the parameter of interest. As we will see below, this model gives an equivalent but complementary perspective on Corollary~\ref{cor:wasserstein}.

Recall that since $(\mathcal{P}_2(\R^D),W_2)$ is positively curved, the barycenter $\mu^\star$ of a measure $P\in\mathcal{P}_2(\mathcal{P}_2(\R^D))$ is also a barycenter of the measure $P$ pushed forward to the tangent space at $\mu^\star$; in other words, the barycenter is also an \emph{exponential} barycenter~ \cite[Corollary~2]{Le-19}.
Moreover, the tangent space at a measure $\mu^\star$ can be identified with the closure in $L^2(\mu^\star)$ of the set of maps $T$ that can be written as the gradient of a function --- this follows from identifying a tangent vector $\log_{\mu^\star}(\mu)$ with $\nabla\phi_{\mus\to\mu}-\id$ where $\nabla\phi_{\mus\to\mu}$ is the optimal transport map from $\mu^\star$ to $\mu$; see \cite[\S 8.4]{AmbGigSav05}.
In particular, $\mus$ is an exponential barycenter for $P$ if and only if
\begin{equation}\label{eq:expbary}
\int \nabla\phi_{\mus\to\mu}(x)\ud P(\mu)=x,\, \text{ for $
\mu^\star$-almost all }x.
\end{equation}

Exponential barycenters correspond to critical points that are not necessarily global minima of the variance functional. Hence, \eqref{eq:expbary} does not imply that  $\mu^\star$ is a barycenter of $P$.
However, one can easily check that the variance equality (Theorem~\ref{thm:varequal}) holds also when $\mus$ is an exponential barycenter.
It implies that if \eqref{eq:expbary} holds and $k_{\mus}^\nu(\mu)\ge 0$ for all $\nu\in\mathcal{P}_2(\R^D)$ and $P$-almost all $\mu$, then $\mu^\star$ is also a barycenter of $P$.

Wasserstein exponential barycenters as in~\eqref{eq:expbary} arise naturally as the parameter of interest in the following template deformation model \cite{boissard2015distribution, ZemPan19}. Let $T\in L^2(\R^D,\R^D,\mus)$ be a random function, often call \emph{warping function}. In a template deformation model, one observes independent copies of 
\begin{equation}
    \label{eq:templatedef}
    \mu=T_{\#}\mus\,, \quad T \sim Q
\end{equation}
and the goal is to estimate $\mus$.
Under some conditions on the distribution $Q$ of $T$, $\mus$ is an exponential barycenter of the distribution of $\mu$. For example if $T$ is linear and identified to a positive semidefinite $D \times D$ matrix, it is sufficient to assume the $\E_Q[T]=I_D$. More generally, we can assume that $T$ is a gradient of a convex function. Since $T_\#\mus=\mu$, we have by Brenier's theorem that $T=\nabla \varphi_{\mus\to \mu}$. In that case, $\E_Q[T]=I_D$ is replaced with~\eqref{eq:expbary}. Additional regularity conditions on the warping functions ensures that $\mus$ is, in fact, a barycenter.

The following result follows readily from Corollary~\ref{cor:wasserstein}.
\begin{cor}
Fix $\alpha, \beta>0$ such that $\beta - \alpha < 1$ and consider the template deformation model~\eqref{eq:templatedef} where $Q$ is supported on gradients of $\alpha$-strongly convex and $\beta$-smooth function $\phi_{\mus\to\mu}$.  Moreover, assume that
$$
\int T(x)\ud Q(T)=x,\, \text{ for $
\mu^\star$-almost all }x.
$$
Let $\mu_n$ denote the empirical barycenter of $n$ independent copies of $\mu$ from the template deformation model~\eqref{eq:templatedef}.
Then, for all $n\ge1$, it holds
\[
\esp W_2^2(\mu_n,\mus)\le  \frac{4\sigma^2}{(1-\beta+\alpha)^2n}\,,
\]
where 
\[
\sigma^2=\E W_2^2(\mu, \mus)=\int \|T(x)-x\|^2 \ud \mus (x)\ud Q(T).
\]
\end{cor}

\section*{Acknowledgments}
TLG, PR and QP acknowledge the hospitality of the Institute for Advanced Study where most of this work was carried out. TLG and QP were supported by the Russian Academic Excellence Project 5-100. PR was supported by NSF awards IIS-1838071, DMS-1712596 and DMS-TRIPODS-1740751; grant 2018-182642 from the Chan-Zuckerberg Initiative DAF and the  Schmidt Foundation.
The authors thank Sinho Chewi and anonymous reviewers for helpful remarks.

\bibliographystyle{aomalpha}
\bibliography{extracted}

\end{document}